\DeclareMathAlphabet{\mathcalligra}{T1}{calligra}{m}{n}
\DeclareMathAlphabet{\mathpzc}{OT1}{pzc}{m}{it}
\newtheorem{theorem}{Theorem}[section]
\newtheorem{corollary}[theorem]{Corollary}
\newtheorem{lemma}[theorem]{Lemma}
\newtheorem{proposition}[theorem]{Proposition}
\theoremstyle{definition}
\newtheorem{definition}[theorem]{Definition}
\newtheorem{remark}[theorem]{Remark}
\newtheorem{example}[theorem]{Example}
\theoremstyle{remark}
\newcommand{\D}{{\mathbb{D}}}
\newcommand{\F}{{\mathbb{F}}}
\newcommand{\N}{{\mathbb{N}}}
\newcommand{\Q}{{\mathbb{Q}}}
\newcommand{\R}{{\mathbb{R}}}
\renewcommand{\SS}{{\mathbb{S}}}
\newcommand{\Z}{{\mathbb{Z}}}
\newcommand{\Dd}{{\mathcal{D}}}
\newcommand{\Gg}{{\mathcal{G}}}   
\newcommand{\Nn}{{\mathcal{N}}}
\newcommand{\Oo}{{\mathcal{O}}}
\newcommand{\Ss}{{\mathcal{S}}}
\newcommand{\Tt}{{\mathcal{T}}}
\newcommand{\Uu}{{\mathcal{U}}}
\newcommand{\Ww}{{\mathcal{W}}}
\newcommand{\im}{{\rm im\, }}             
\newcommand{\id}{{\rm id}}                
\newcommand{\IND}{{\rm ind}}                  
\newcommand{\Fix}{{\rm Fix}}          
\newcommand{\Crit}{{\rm Crit}}        
\newcommand{\Ho}{{\rm H}}              
\newcommand{\I}{{\rm I}}               
\newcommand{\RP}{\R{\rm P}}            
\newcommand{\cat}{\mathrm{cat}}            
\newcommand{\catamb}{\mathrm{cat^{a}}}     
\newcommand{\cupp}{\mathrm{cup}}           
\newcommand{\sub}{\mathrm{sub}}            
\newcommand{\CUP}{\mathop{\cup}}           
\newcommand{\CAP}{\mathop{\cap}}           
\newcommand{\interior}[1]{\accentset{\circ}{#1}} 
\newcommand{\boundary}[1]{\dot{#1}}              
\newcommand{\norm}{{\rm norm}}
\newcommand{\eps}{{\varepsilon}}
\newcommand{\mbf}[1]{\text{\boldmath $#1$}}  
\newcommand{\ba}{{\mbf{a}}}
\newcommand{\bb}{{\mbf{b}}}
\newcommand{\bc}{{\mbf{c}}}
\newcommand{\balpha}{{\mbf{\alpha}}}
\newcommand{\bbeta}{{\mbf{\beta}}}
\def\NABLA#1{{\mathop{\nabla\kern-.5ex\lower1ex\hbox{$#1$}}}}
\def\Nabla#1{\nabla\kern-.5ex{}_{#1}}
\def\Tabla#1{\Tilde\nabla\kern-.5ex{}_{#1}}
\def\abs#1{\mathopen|#1\mathclose|}   
\def\Abs#1{\left|#1\right|}            
\def\norm#1{\mathopen\|#1\mathclose\|}
\renewcommand{\Tilde}{\widetilde}
\newcommand{\p}{{\partial}}
\newcommand{\Index}[1]{#1\index{#1}}
\newlength\eqshift
\renewcommand\theequation{\thesection.\arabic{equation}}
\let\savetheequation\theequation
\renewcommand*\env@matrix[1][\arraystretch]{%
  \edef\arraystretch{#1}%
  \hskip -\arraycolsep
  \let\@ifnextchar\new@ifnextchar
  \array{*\c@MaxMatrixCols c}}
\let\save@mathaccent\mathaccent
\newcommand*\if@single[3]{%
  \setbox0\hbox{${\mathaccent"0362{#1}}^H$}%
  \setbox2\hbox{${\mathaccent"0362{\kern0pt#1}}^H$}%
  \ifdim\ht0=\ht2 #3\else #2\fi
  }
\newcommand*\rel@kern[1]{\kern#1\dimexpr\macc@kerna}
\newcommand*\widebar[1]{\@ifnextchar^{{\wide@bar{#1}{0}}}{\wide@bar{#1}{1}}}
\newcommand*\wide@bar[2]{\if@single{#1}{\wide@bar@{#1}{#2}{1}}{\wide@bar@{#1}{#2}{2}}}
\newcommand*\wide@bar@[3]{%
  \begingroup
  \def\mathaccent##1##2{%
    \let\mathaccent\save@mathaccent
    \if#32 \let\macc@nucleus\first@char \fi
    \setbox\z@\hbox{$\macc@style{\macc@nucleus}_{}$}%
    \setbox\tw@\hbox{$\macc@style{\macc@nucleus}{}_{}$}%
    \dimen@\wd\tw@
    \advance\dimen@-\wd\z@
    \divide\dimen@ 3
    \@tempdima\wd\tw@
    \advance\@tempdima-\scriptspace
    \divide\@tempdima 10
    \advance\dimen@-\@tempdima
    \ifdim\dimen@>\z@ \dimen@0pt\fi
    \rel@kern{0.6}\kern-\dimen@
    \if#31
      \overline{\rel@kern{-0.6}\kern\dimen@\macc@nucleus\rel@kern{0.4}\kern\dimen@}%
      \advance\dimen@0.4\dimexpr\macc@kerna
      \let\final@kern#2%
      \ifdim\dimen@<\z@ \let\final@kern1\fi
      \if\final@kern1 \kern-\dimen@\fi
    \else
      \overline{\rel@kern{-0.6}\kern\dimen@#1}%
    \fi
  }%
  \macc@depth\@ne
  \let\math@bgroup\@empty \let\math@egroup\macc@set@skewchar
  \mathsurround\z@ \frozen@everymath{\mathgroup\macc@group\relax}%
  \macc@set@skewchar\relax
  \let\mathaccentV\macc@nested@a
  \if#31
    \macc@nested@a\relax111{#1}%
  \else
    \def\gobble@till@marker##1\endmarker{}%
    \futurelet\first@char\gobble@till@marker#1\endmarker
    \ifcat\noexpand\first@char A\else
      \def\first@char{}%
    \fi
    \macc@nested@a\relax111{\first@char}%
  \fi
  \endgroup
}
\long\def\symbolfootnote[#1]#2{\begingroup%
\def\thefootnote{\fnsymbol{footnote}}\footnote[#1]{#2}\endgroup}
\title{Conley pairs in geometry -- Lusternik-Schnirelmann theory and more}
\author{
  Joa Weber\footnote{
        {\bf Financial support:}
        O presente trabalho foi realizado com apoio da FAPESP e do CNPq,
        Conselho Nacional de Desenvolvimento Cient\'{\i}fico e Tecnol\'{o}gico - Brasil.
        %
        \newline
         {\bf Email:} joa@ime.unicamp.br
        {\bf Address:}
        Instituto de Matem\'{a}tica, Estat\'{\i}stica
        e Computa\c{c}\~{a}o Scient\'{\i}fica,
        Universidade Estadual de Campinas,
        Rua S\'{e}rgio Buarque de Holanda~651,
        Campinas, SP, Brasil. 
        %
  }
    \\
    IMECC UNICAMP
}
\date{\today}
\begin{document}

\maketitle 

\begin{abstract}
Firstly, we wish to motivate that Conley pairs,
realized via Salamon's definition~\cite{salamon:1990a},
are rather useful building blocks in geometry:
Initially we met Conley pairs
in an attempt to construct Morse
filtrations of free loop spaces~\cite{weber:2014c}.
From this fell off quite naturally, firstly, an alternative proof~\cite{Weber:2014e}
of the cell attachment theorem in Morse theory~\cite{milnor:1963a}
and, secondly, some ideas~\cite{Majer:2015a}
how to try to organize the closures of the unstable
manifolds of a Morse-Smale gradient flow as a CW decomposition
of the underlying manifold.
Relaxing non-degeneracy of critical points to isolatedness
we use these Conley pairs to implement the gradient flow proof
of the Lusternik-Schnirelmann Theorem~\cite{Lusternik:1934a}
proposed in Bott's survey~\cite{bott:1982b}.

Secondly, we shall use this opportunity to provide an exposition of
Lusternik-Schnirelmann (LS) theory 
\todo{work in rational LS thy:\cite{Felix:2001a}}
based on thickenings
of unstable manifolds via Conley pairs. We shall cover the
Lusternik-Schnirelmann Theorem~\cite{Lusternik:1934a},
cuplength, subordination,
the LS refined minimax principle, and a variant of the LS category
called ambient category.
\end{abstract} 
 
\tableofcontents



\section{Introduction and applications}
Throughout let $\varphi$ be a downward gradient flow 
on a (smooth) closed manifold~$M$, that is a one-parameter
group $\{\varphi_t\}_{t\in\R}$ of diffeomorphisms of $M$
determined by
$$
     \frac{d}{dt}\varphi_t=-\left(\nabla f\right)\circ\varphi_t,\qquad
     \varphi_0=\id,
$$
for some function $f:M\to\R$ of class $C^2$ and where the
gradient $\nabla f=\nabla^g f$ is determined by the identity
$df=g(\nabla f,\cdot)$, given a Riemannian metric $g$ on $M$.
By $\nabla$ we shall also denote the Levi-Civita connection
associated to $g$.

While Conley theory~\cite{conley:1978a} deals with rather general
flow invariant sets, the present paper concentrates on the two
simplest cases, that of only isolated and that of only non-degenerate
critical points.
\begin{itemize}
\item
  \textit{Morse theory}: All critical points $x\in\Crit f$ are non-degenerate
  in the sense that the Hessian of $f$ at $x$ is non-singular, that is
  zero is not an eigenvalue. The Morse index $\IND_f(x)$ of a critical
  point is the number $k$ of negative eigenvalues, counted with multiplicities.
  Such $f$ is called a Morse function and satisfies the (weak) Morse
  inequalities: There is a lower bound for the number of critical points
  of $f$ of Morse index $k$ in terms of the dimension of the $k^{\rm th}$
  singular homology of $M$ with coefficients in a field $\F$. Namely,
  $$
     \Abs{\Crit_k f}=:c_k\ge\beta_k(\F):=\dim\Ho_k(M;\F)
  $$
  for every integer $k=0,\dots,n:=\dim M$; see e.g.~\cite{milnor:1963a}.
  For rational coefficients the integer $\beta_k(\Q)$ is
  the $k^{\rm th}$ \textbf{Betti number} $b_k(M)$ of $M$.
\item
  \textit{Lusternik-Schnirelmann (LS) theory}: All critical
  points of $f$ are isolated. Then their number is bounded below by
  another homotopy invariant, the \textbf{Lusternik-Schnirelmann category}
  $\cat(M)$. It is the least integer $\ell\ge 1$ such that there is
  an open cover $U_1,\dots,U_\ell$ of $M$ by $\ell$
  nullhomotopic\footnote{
    A subset $A\subset M$ is \textbf{nullhomotopic},
    or \textbf{contractible in \boldmath$M$},
    if the inclusion $A\hookrightarrow M$ is nullhomotopic:
    $\exists F\in C^0([0,1]\times A,M)$
    $\exists m\in M$:
    $F(0,a)=a$ and $F(1,a)=m$ $\forall a\in A$.
    }
  sets.
\end{itemize}

Note that non-degenerate implies isolated by the inverse function theorem.

\begin{theorem}[Lusternik-Schnirelmann~\cite{Lusternik:1934a}]\label{thm:LS}
Suppose $f$ is a $C^2$ function on a closed manifold $M$, then
$$
     \Abs{\Crit f}\ge \cat(M).
$$
\end{theorem}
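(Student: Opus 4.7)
The plan is to apply the classical minimax principle, with Conley pairs providing the deformation lemma at critical levels where only isolatedness (not non-degeneracy) is available. Write $\ell := \cat(M)$, let $\cat_M(A)$ denote the least number of open subsets of $M$, each nullhomotopic in $M$, needed to cover $A \subset M$, and define
\[
c_k \;:=\; \inf\Bigl\{\,\sup_{x\in A} f(x)\;:\;A\subset M,\ \cat_M(A)\ge k\,\Bigr\},
\qquad 1\le k\le\ell.
\]
Taking $A=M$ shows each $c_k$ is finite, and $c_1 \le c_2 \le \cdots \le c_\ell$ is immediate. The \emph{key claim} is that for every $r\ge 0$, whenever $c = c_k = c_{k+1} = \cdots = c_{k+r}$, the critical set $K := \Crit f \cap f^{-1}(c)$ contains at least $r+1$ points.

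Granting this, group the $c_k$'s by distinct values $c_{k_1} < \cdots < c_{k_s}$ with $k_1 = 1$ and $k_{s+1} := \ell + 1$: the level $c_{k_j}$ is shared by $k_{j+1} - k_j$ consecutive $c_k$'s and thus carries $\ge k_{j+1} - k_j$ critical points, and summing over $j$ yields $|\Crit f| \ge \sum_j (k_{j+1} - k_j) = \ell = \cat(M)$.

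To prove the key claim, let $m := |K|$ and write $K = \{x_1, \ldots, x_m\}$, finite by compactness of $M$ and isolatedness of $\Crit f$ (possibly $m = 0$). For each $x_i$ pick, in the sense of Salamon, a Conley pair $(N_i, L_i)$ with $N_i$ so small as to lie in a coordinate ball, hence nullhomotopic in $M$; put $U := N_1 \cup \cdots \cup N_m$, so that $\cat_M(U) \le m$. The Conley-pair deformation lemma then supplies $\varepsilon > 0$ and a continuous map $\phi : M \to M$, homotopic to $\id_M$, with
\[
\phi\bigl(\{f \le c + \varepsilon\}\bigr) \;\subset\; \{f \le c - \varepsilon\} \cup U.
\]
Choose $A$ with $\cat_M(A) \ge k + r$ and $\sup_A f \le c + \varepsilon$. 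Pulling back any open cover $\{V_j\}$ of $\phi(A)$ by sets contractible in $M$ produces an open cover $\{\phi^{-1}(V_j)\}$ of $A$, and each $\phi^{-1}(V_j) \hookrightarrow M$ factors up to homotopy through $V_j \hookrightarrow M$ and is thus nullhomotopic; hence $\cat_M(A) \le \cat_M(\phi(A))$. Subadditivity and monotonicity of $\cat_M$ now give
\[
k + r \;\le\; \cat_M(\phi(A)) \;\le\; \cat_M\bigl(\phi(A) \cap \{f \le c - \varepsilon\}\bigr) + \cat_M(U) \;\le\; (k - 1) + m,
\]
the first upper bound because any set whose supremum of $f$ is $< c_k$ must have $\cat_M < k$ by the definition of $c_k$. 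So $m \ge r + 1$, as claimed.

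The principal technical obstacle is the Conley-pair deformation lemma invoked above when $K$ consists of isolated but possibly degenerate critical points. Outside $U$ a standard bounded-time descending gradient flow does the work; inside $U$ one must respect the exit-set structure of each pair $(N_i, L_i)$, so that orbits not trapped by $K$ eventually leave through $L_i$ and descend past $c - \varepsilon$, while orbits trapped by $K$ stay inside $U$, all glued together into a single continuous $\phi$ homotopic to $\id_M$. In the Morse case cutoff unstable-manifold neighborhoods suffice; the general isolated case is precisely what Salamon's Conley pairs --- the backbone of this paper --- are designed to handle.
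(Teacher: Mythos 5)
Your proof takes the classical minimax route: define the Lusternik--Schnirelmann values $c_k = \inf\{\sup_A f : \cat_M(A)\ge k\}$, show that repeated values force multiple critical points on the shared level via a deformation lemma, and sum. The paper's proof of Theorem~\ref{thm:LS} is genuinely different and considerably shorter: compactness of $M$ and isolatedness of $\Crit f$ imply that every point flows forward to a critical point, so the stable manifolds $W^s(x)$ already cover $M$, each contracts onto its critical point, and Proposition~\ref{prop:amb-thick}~(i) thickens each $W^s(x)$ to an \emph{open} set still contractible in $M$ --- an explicit categorical cover by $\abs{\Crit f}$ sets, hence $\cat(M)\le\abs{\Crit f}$, with no minimax values and no deformation lemma. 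Your route buys more (the $c_k$ are actual critical values, and you see directly how category forces them to spread over distinct levels), but it costs the deformation lemma
$$\phi\bigl(\{f\le c+\varepsilon\}\bigr)\subset\{f\le c-\varepsilon\}\cup U,\qquad \phi\simeq\id_M,$$
which you rightly flag as the principal technical obstacle and only sketch. That statement is Palais's deformation theorem~\cite[Thm.~5.11]{Palais:1966b}, and the paper itself invokes it later, in the proof of the refined minimax principle (Theorem~\ref{NEW-thm:LS-princple}), where the Conley-block structure of the thickenings $\Ww_i$ is used to make $M^{c-\varepsilon}\cup\Ww$ a deformation retract of $M^{c+\varepsilon}$; so your plan is sound in outline. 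But the recipe ``outside $U$ flow down for bounded time, inside $U$ respect the exit sets, glue'' conceals the actual work, in particular near the bounce-off locus $N_i^0$ and in producing one \emph{continuous} $\phi$ from the two regimes --- this is the one real gap as written. The rest of your argument (monotonicity $\cat_M(A)\le\cat_M(\phi(A))$ from $\phi\simeq\id_M$, subadditivity of $\cat_M$ over $\{f\le c-\varepsilon\}\cup U$, and the bookkeeping over distinct values of the $c_k$) is correct and standard.
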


Pick a Morse function to get finiteness of $\cat(M)$.
Palais~\cite{Palais:1966b} generalized Theorem~\ref{thm:LS}  to
infinite dimensions replacing compactness of $M$ 
by the Palais-Smale condition, also called condition~(C).
In the present exposition we complete, with the help of Conley
pairs, the following gradient flow proof of the Lusternik-Schnirelmann
Theorem~\ref{thm:LS} proposed in Bott's survey~\cite[p.\,342]{bott:1982b}.

\begin{proof}
Assume $\Crit f$ is a finite set; otherwise, we are done.
Pick a Riemannian metric on $M$ and consider the downward
gradient flow $\varphi=\{\varphi_t\}_{t\in\R}$.
The stable manifold $W^s(x)$ of a
critical point $x$ is the set of all $q\in M$
for which the limit $\lim_{t\to\infty}\varphi_t q$
exists and is equal to $x$. The limit exists for every point $p$ of
$M$ due to \emph{compactness} of $M$ and \emph{isolatedness}
of the critical points (the set $\Crit f$ is finite).\footnote{
  The $\omega$-limit set $\omega(p)$ is a connected subset of $\Crit f$,
  thus a singleton $\{y\}$, as $\Crit f$ is discrete;
  see e.g.~\cite[Ch.\,1 \S 1 Ex.\,3]{palis:1982a}.
  Hence $\lim_{t\to\infty}\varphi_t p=\omega(p)=y\in\Crit f$.
  }
Therefore the stable manifolds cover $M$.
While a stable manifold in general is not an open subset of $M$
and, without the non-degeneracy assumption
on its critical point $x$, also not necessarily
any more an embedded open disk,\footnote{
  For $f(u)=u^3$ with $u\in\R$ the stable manifold
  of $0$ is the ``half disk'' $W^u(0)=[0,\infty)$.
  }
it still contracts onto $x$.
The yet missing piece is Proposition~\ref{prop:amb-thick}~(i)
which asserts that one can thicken each stable manifold to an open
subset $\Ww_x^*$ of $M$ preserving contractibility. So $M$ is covered
by $\ell=\abs{\Crit f}$ open nullhomotopic sets.
\end{proof}

The proof of Proposition~\ref{prop:amb-thick} (existence of thickening)
rests on the notion of

\subsubsection*{Conley pairs}
A basic notion in Conley theory is that of an
index pair for an isolated invariant set $S$.
In the Morse case an explicit construction for $S=\{ x\}$
has been given by Salamon~\cite{salamon:1990a}: For $x\in\Crit f$
and reals $\eps,\tau>0$ define a pair of spaces $(N,L)$ by
\begin{equation}\label{eq:N-iso}
\begin{split}
     N=N_x^{\eps,\tau}
   :&=
     \left\{ p\in M\mid
     \text{$f(p)\le c+\eps$, $f(\varphi_\tau p)\ge c-\eps$}
     \right\}_x ,\quad c:=f(x),
\end{split}
\end{equation}
where $\{\ldots\}_x$ denotes the path connected component
that contains $x$, and
\begin{figure}
\begin{minipage}[b]{.49\linewidth}
  \centering
  \includegraphics
                             {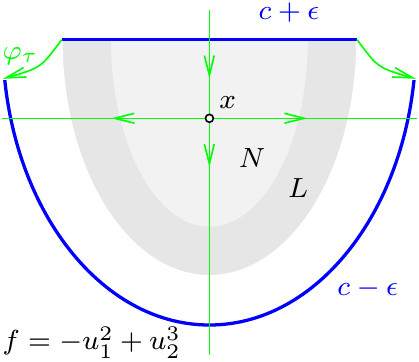}
  \caption{$(N,L)$ for isolated $x\in\Crit f$}
  \label{fig:fig-isolated}
\end{minipage}
\begin{minipage}[b]{.49\linewidth}
  \centering
  \includegraphics
                             {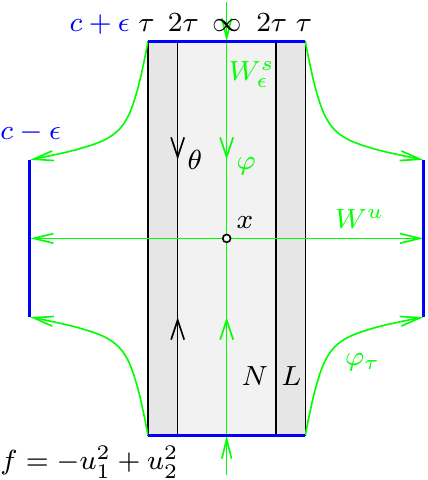}
  \caption{Non-degenerate $x$}
  \label{fig:fig-non-degenerate}
\end{minipage}
\end{figure}
\begin{equation}\label{eq:L-iso}
     L=L_x^{\eps,\tau}
     :=\{ p\in N\mid f(\varphi_{2\tau} p)\le c-\eps\}.
\end{equation}
By Sard's theorem we may suppose that $c\pm\eps$
are regular values of $f$; otherwise, perturb $\eps$.
Note that in case of a local minimum $x$ the set $N$ is a local
sublevel set and $L$ is empty (any point near $x$ eventually gets stuck
on the level $c$ of $x$, so none reaches the lower level $c-\eps$).
\\
In fact, for small $\eps$ and large $\tau$ it holds that
(i)~the fixed point $x$ of $\varphi$ lies in the interior of $N$ but
not in $L$, (ii)~there are no other fixed points in $N$,
(iii)~the subset $L$ is \emph{positively invariant} in $N$, and
(iv)~$L$ is an \emph{exit set} of $N$ in the sense that every forward flow
line which leaves $N$ runs through $L$ first; for details see
Definition~\ref{def:index-pair}.
For a proof of (i--iv) in the \emph{non-degenerate case} see~\cite{Weber:2015c};
see~\cite{weber:2014c} for an infinite dimensional context.

In the more general \emph{isolated case},
meaning that $x$ is just required to be an isolated critical point of $f$,
properties (i--iv) will be established in
Theorem~\ref{thm:Conley-pair} below.
Such $(N,L)$ is called a \textbf{Conley pair}, and $N$ a
\textbf{Conley block}, for the isolated critical point $x$.
Note that the part of the stable manifold $W^s=W^s(x)$ in
$N$ is the ascending disk $W^s_\eps=W^s_\eps(x):=W^s(x)\cap\{f\le f(x)+\eps\}$.
\newline
By the Shrinking Lemma~\ref{le:shrink-to-pt} one can fit $N$
into any given neighborhood of an isolated $x\in\Crit f$ by picking
$\eps,\tau>0$ sufficiently small and large, respectively.

\subsubsection*{Dynamical thickening -- non-degenerate case}
For \emph{non-degenerate} critical points $x$ much more can be shown
for small $\eps$ and large $\tau$: Firstly, the set $N=N_x^{\eps,\tau}$ contracts
onto the ascending disk $W^s_\eps$, as $\tau\to\infty$.
\begin{figure}
  \centering
  \includegraphics
                             {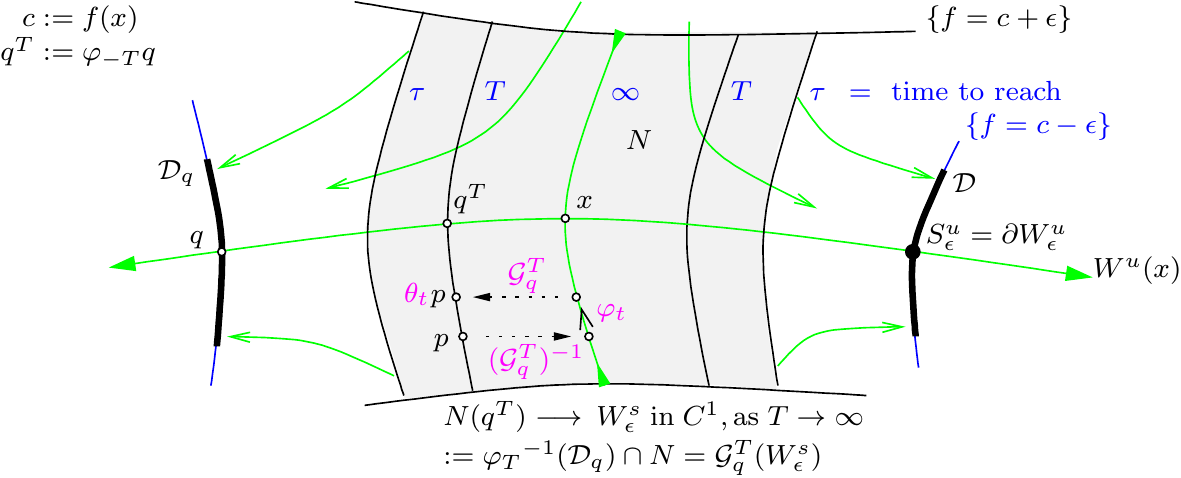}
  \caption{Dynamical thickening $(N,\theta)$ of
                 $(W^s_\eps,\varphi_{\ge 0})$}
  \label{fig:fig-dynam-thick}
\end{figure}
Secondly, the set $N$ is fibered by diffeomorphic copies of
$W^s_\eps$, one copy for each point of the part
$W^u_{\eps,\tau}:=N\cap W^u$ of the unstable manifold
$W^u=W^u(x)$~in~$N$; see Figure~\ref{fig:fig-dynam-thick}.
The construction of the fiber bundle
$W^s_\eps\hookrightarrow N\to W^u_{\eps,\tau}$
starts with a \emph{choice of fibers in the lower level set}:
Endow some neighborhood $\Dd$ of the
descending sphere $S^u_\eps=S^u_\eps(x):=W^u(x)\cap\{f=c-\eps\}$
in the level set $\{f=c-\eps\}$ with the structure
of a disk bundle $\D\hookrightarrow\Dd\to S^u_\eps$
where the codimension of the disk $\D$
is given by the Morse index $k=\IND_f(x)$.\footnote{
  E.g. pick a tubular neighborhood associated to the normal
  bundle of $S^u_\eps$ in $\{f=c-\eps\}$.
  }
For $q\in S^u_\eps$ and $T\ge\tau$ the fiber $N(q^T)$ over
$q^T:=\varphi_{-T}q$ by definition is the part in $N$ of the
pre-image ${\varphi_T}^{-1}\Dd_q$ of the fiber $\Dd_q$.
Let the fiber over $x$ be those points that never reach $\Dd$,
namely $N(x):=W^s_\eps$.
One shows~\cite{Weber:2015c,weber:2014c} that each fiber $N(q^T)$ can be written
as a $C^1$ graph over $W^s_\eps$ or, in other words,
as the image of a $C^1$ embedding $\Gg^T_q:W^s_\eps\to M$.
Transfer the forward semi-flow $\varphi_{\ge 0}$ on $W^s_\eps$
to each fiber via conjugation by the graph maps; see
Figure~\ref{fig:fig-dynam-thick}.
\begin{itemize}
\item
  \textit{Dynamical thickening $(N,\theta)$ of
    $(W^s_\eps,\varphi_{\ge 0})$.}
  As just described $N$ carries the structure of a fiber bundle
  $W^s_\eps\hookrightarrow N\to W^u_{\eps,\tau}$
  equipped with a fiberwise forward semi-flow $\theta$.
  Fibers and flow are modeled on the ascending disk $W^s_\eps$
  equipped with $\varphi_{\ge 0}$ and defined as
  graphs over $W^s_\eps$ and by conjugation.
  Hence $\theta$ deforms the total space $N$
  into the base space $ W^u_{\eps,\tau}:=N\cap W^u$;
  see Figures~\ref{fig:fig-non-degenerate} and~\ref{fig:fig-dynam-thick}.
\item
  \textit{Morse filtration.}
  Dynamical thickening was introduced in~\cite{weber:2014c} to
  construct a Morse filtration of a loop space in order to represent
  Morse homology for semi-flows in terms of singular homology.
  For an overview see~\cite{weber:2014b}.
\item
  \textit{Flow selector.} Dynamical thickening was applied
  in~\cite{Weber:2014e} to prove the cell
  attachment theorem in Morse theory~\cite[Thm.\,3.1]{milnor:1963a}
  through a basic two step deformation.
  Here entered crucially the construction of a \textbf{flow
  selector} $\Ss^+$, namely
  a hypersurface transverse to two flows,
  which came up in~\cite{Majer:2015a} in two flavors,
  via Conley pairs and via a carving technique.
  The point is that transversality allows to switch along $\Ss^+$
  from one to the other flow in a continuous fashion.
\item
  \textit{CW decomposition.} It is an old believe that the closures
  of the unstable manifolds of a Morse-Smale\footnote{
    The Morse-Smale condition requires transversality
    $W^u(x)\pitchfork W^s(y)$ for all $x,y\in\Crit f$.
    }
  gradient flow $\varphi$
  of a Morse function $f$ on a closed manifold $M$
  provide a CW decomposition of $M$.
  If the Riemannian metric is Euclidean near the critical points
  this is a result of Kalmbach~\cite{Kalmbach:1975a};
  see also Laudenbach in~\cite{Bismut:1992a}.
  In the general cases two methods of proof have been proposed
  in~\cite{2011arXiv1102.2838Q} and~\cite{2016arXiv161007509K}.
  
  Here is a geometric approach via asymptotic extensions
  of dynamical thickenings arising from joint ideas in~\cite{Majer:2006b,Majer:2015a}.
  But so far this only works in dimension two (where the
  problem of compatibly organizing fibers on overlaps is void).
  Whereas the flow $\varphi$ serves to identify
  diffeomorphically an unstable manifold
  with an open unit disk, this identification does not extend
  continuously to the boundary: In Figure~\ref{fig:fig-filling-curves-a}
  the endpoints of the $\color{green}\varphi$ flow lines
  do not even fill the boundary $\Delta$ of the unstable manifold $W$.
\begin{figure}
  \centering
  \includegraphics{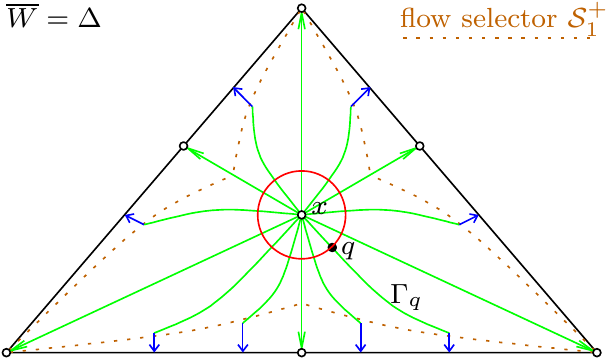}
  \caption{Curves $\Gamma_q$ composed of flows
                 {\color{green} $\varphi$} and {\color{blue} $\theta^1$}
                 partition $W=W^u(x)$
          }
  \label{fig:fig-filling-curves-a}
\end{figure}
  \\
  Now extend each dynamical thickening $(N_1,\theta^1)$
  of an index 1 point down to level $c_1-\eps_1$ via $\varphi$
  and then move the fibers that lie on level $c_1-\eps_1$ further down
  all the way via the level preserving diffeomorphisms $\hat\varphi_t$
  generated by the vector field $X=-\nabla f/\norm{\nabla f}^2$
  on $M\setminus \Crit f$.\footnote{
    Using $\hat\varphi$ ensures that fibers of different thickenings meet compatibly:
    Descending fibers will lie in level sets. So if one of them intersects a
    lower lying entrance set $N^+$ (contained in a level set itself),
    it (locally) lies
    completely in $N^+$. So on overlaps the $\theta$'s are transverse.
    }
  We get a fiber bundle $\Uu_1\to W$ that contains $N_1$ and carries a
  fiberwise forward flow defined via conjugation by the
  $\hat\varphi_t$ and still denoted by $\theta^1$.
  \\
  To make the dynamical thickening $(\Uu_1,\theta^1)$
  forward $\varphi$-attractive one constructs a flow selector
  $\Ss_1^+\subset\Uu_1$ and throws away from each fiber
  of $\Uu_1$ the part outside $\Ss_1^+$:
  Distribute the entrance hypersurface $N_1^+$,
  see~(\ref{eq:N^+}) and Figure~\ref{fig:fig-Brexit},
  utilizing a monotone smooth function,\footnote{
    say $\chi:(\tau,\infty)\to(0,\infty)$
    with $\chi^\prime<0$ and $\chi(t)\to\infty$ and $0$,
    as $t\to\tau$ and $\infty$, respectively.
    }
  similar in spirit to~\cite{Weber:2014e},
  to obtain a flow selector $\Ss_1^+:=\Phi N_1^+\subset\Uu_1$
  that bounds a $\varphi$-attractive fiber bundle $\Ss_1:=\Phi N_1\to W$.
  (A fiber of $\Ss_1\subset\Uu_1$ arises from a fiber of $\Uu_1$
  by throwing away the part not enclosed by $\Ss_1^+$.
  The fibers of $\Ss_1$ are invariant under $\theta^1$.)
  This defines the $\varphi$-attractive dynamical thickening
  $(\Ss_1:=\Phi N_1,{\color{blue}\theta^1})$.
  The dotted line in Figure~\ref{fig:fig-filling-curves-a}
  shows the flow selector $\Ss_1^+$.
  The curves $\Gamma_q$ are composed of $\color{green}\varphi$ trajectories
  followed by $\color{blue}\theta^1$ trajectories -- transition taking place
  along the flow selector $\Ss_1^+$.
  The curves $\Gamma_q$ partition the unstable manifold
  $W$ of $x$ and its endpoints cover the boundary $\Delta$.
  The endpoints of the $\Gamma_q$ vary continuously in the elements
  $q$ of the descending sphere $\color{red} S^u_\eps$ as both
  $\color{green}\varphi$ and $\color{blue}\theta^1$ are transverse
  to~the~hypersurface~$\Ss_1^+$.
\end{itemize}

\vspace{.2cm}
\noindent
\textbf{Organization of this paper.}
In Section~\ref{sec:N-L} we prove the defining properties (i--iv) for Conley pairs
$(N,L)$ associated to \emph{isolated} critical points
and construct the open contractible thickenings of the stable manifolds
yet missing in the proof of the Lusternik-Schnirelmann
Theorem~\ref{thm:LS}.
Section~\ref{sec:LS} reviews
further tools to detect critical points:
Cuplength in cohomology, its dual cousin the subordination number,
and a variant $\catamb(M)$ of the LS category 
called ambient LS category.
For further reading, concerning LS theory, we recommend the
comprehensive monograph~\cite{cornea:2003a}
or the more elementary concise presentation in~\cite[IV.3]{zehnder:2010a}.

\vspace{.2cm}
\noindent
\textit{\small Acknowledgements.}
{\small
The author would like to thank brazilian tax payers
for excellent teaching and research conditions at UNICAMP.
}

\vspace{.2cm}
\noindent
For convenience of the reader we conclude the introduction
by introducing these tools and summarize their interactions.
Throughout $R$ is a commutative ring:

\subsubsection*{Cuplength}
The \textbf{$\mbf{R}$-cuplength} $\cupp_R(X)$ of a
topological space $X$ is the largest integer $k\in\N$ such that there
exist $k$ cohomology classes $\mbf{\alpha_1},\dots,\mbf{\alpha_{k}}$
of \emph{positive} degree (grading) in the cohomology ring
$\Ho^*=\Ho^*(X;R)$ whose cup product is non-zero
\begin{equation}\label{eq:cupp}
     \mbf{\alpha_1}\CUP\dots\CUP\mbf{\alpha_{k}}\not= 0.
\end{equation}
If no such classes exist (cohomology in positive degree is $0$),
set $\cupp_R(X)=0$.
Recall that degrees add up under $\cup:\Ho^k\times\Ho^\ell\to\Ho^{k+\ell}$
and $\Ho^{k>\dim M}(M)=0$ for manifolds.
So the positive degree assumption implies the finiteness estimate
\begin{equation}\label{eq:cupp-dim}
     \cupp_R(M)\le\dim M
\end{equation}
for any \emph{connected}\footnote{
  Connectedness is crucial, as the RHS is
  independent of the component~number.
  }
manifold $M$. Many $R$-cuplengths of many common manifolds are
known; see e.g.~\cite[\S 1.2]{cornea:2003a}. This, together with the
key estimate $\abs{\Crit f}>\cupp_R(M)$, see~(\ref{eq:inequalities})
below, makes $\cupp_R$ a rather useful quantity.

\subsubsection*{Subordination}
For non-trivial homology
classes $\mbf{b_1},\mbf{b_2}\in\Ho_*(M;R)\setminus\{0\}$
of  a closed manifold $M$
one writes $\mbf{b_1}<\mbf{b_2}$ and says
$\mbf{b_1}$ \textbf{is subordinated to} $\mbf{b_2}$,
if there is a cohomology class $\mbf{\omega}\in\Ho^{p>0}$ of
\emph{positive} degree such that
$$
     \mbf{b_1}=\mbf{\omega}\CAP\mbf{b_2}
$$
where $\CAP:\Ho^p\times\Ho_m\to\Ho_{m-p}$ is the cap product.
Subordination is transitive and the degree strictly increases.
The \textbf{$\mbf{R}$-subordination number}
$\sub_R(M)$ is the largest integer $k\in\N$
such that there is a chain of $k$ subordinated classes
$$
     \mbf{b_1}<\mbf{b_2}<\dots<\mbf{b_k}<\mbf{b_{k+1}}.
$$
The significance of subordination lies in the fact that existence
of classes $\mbf{b_1}<\mbf{b_2}$ guarantees existence of two
different critical values, thus critical points, for any $C^2$ function $f$
whose critical points are all isolated; see Theorem~\ref{NEW-thm:LS-princple}
(Lusternik-Schnirelmann refined minimax principle).

\subsubsection*{Inequalities and comparisons}
For a closed manifold $M$ equipped with
a $C^2$ function $f$ there are the inequalities
\begin{equation}\label{eq:inequalities}
     \Abs{\Crit f}
     \ge
     \catamb(M)
     \ge
     \cat(M)
     >
     \cupp_R(M)
     =
     \sub_R(M)
\end{equation}
where $R$ is any commutative ring.
In the non-degenerate (Morse) case it holds
\begin{equation}\label{eq:Morse-subord-compar}
     \Abs{\Crit f}
     \ge\dim \Ho_*(M;\F)
     \ge 1+\sub_\F(M).
\end{equation}
for any field $\F$. All these inequalities will be proved below.

For any given field $\F$ Morse theory gives
by~(\ref{eq:Morse-subord-compar})
stronger (or equal)\footnote{
  Example for 'equal':
  $\dim\Ho_*(\RP^2;\Z_2)=3=1+\cupp_{\Z_2}(\RP^2)$ or for $\F=\Q$: $1=1+0$.
  }
critical point estimates than subordination/cuplength.
In contrast, category can be superior
to Morse theory, depending on the field. Indeed
\begin{equation}\label{eq:exception}
     \dim\Ho_*(\RP^2;\Q)=1+0+0<3=\cat(\RP^2).
\end{equation}
But in case of $\RP^2$ there \emph{still exists}
a field bringing back Morse theory, namely
$$
     \dim\Ho_*(\RP^2;\Z_2)=1+1+1=\cat(\RP^2).
$$
Is this a general fact? For simply connected orientable manifolds the
answer is yes: These satisfy $\dim\Ho_*(M;\Q)\ge\cat(M)$
by~\cite[Ex.\,1.33, cf. p.\,291]{cornea:2003a}.

\section{Conley pairs for isolated critical points}\label{sec:N-L}
\begin{definition}\label{def:index-pair}
Let $\phi=\{\phi_t\}$ be a continuous flow on a topological space~$X$.
A~{\bf\boldmath Conley pair $(N,L)$
for an isolated fixed point $x$} of $\phi$
consists of a pair of compact subspaces $(N,L)$
of $X$ which satisfy
\begin{enumerate}
  \item[(i)]
     $x\in \interior{N}\setminus L$
  \item[(ii)]
     $N\cap\Fix\,\phi=\{x\}$
  \item[(iii)]
     $p\in L$ and $\phi_{[0,t]} p\subset N
     \,\Rightarrow\,
     \phi_t p\in L$
  \item[(iv)]
     $p\in N$ and 
     $\phi_T p\notin N
     \;\Rightarrow\;
     \exists \sigma\in[0,T):
     \phi_\sigma p\in L$ and
     $\phi_{[0,\sigma]} p\subset N$
\end{enumerate}
\end{definition}

In particular, conditions~(i) and~(ii) tell that $N$ is a neighborhood
of $x$ which contains no other critical points in its closure.
Condition~(iii) says that {\bf\boldmath$L$ is positively
invariant in $N$} and~(iv) asserts that every semi-flow line
which leaves $N$ goes through $L$ \emph{before} exiting.
Hence we say that {\bf\boldmath $L$ is an exit set of~$N$};
cf. Figures~\ref{fig:fig-isolated} and~\ref{fig:fig-non-degenerate}.
The set $N$ is also called a \textbf{Conley block}.
Note that in this generality, as opposed to the
realization~(\ref{eq:N-iso}) of $(N,L)$ for downward gradient flows,
there is no obstruction that exiting points would re-enter $N$.
For downward gradient flows the assumption in~(iii)
is equivalent to~(\ref{eq:ass-iii-altern}).

\vspace{.2cm}
Coming back to gradient flows suppose from now on, throughout
Section~\ref{sec:N-L}, that $f:M\to\R$ is a $C^2$ function
on a closed Riemannian manifold $(M,g)$.

\vspace{.1cm}
\noindent
\textit{Preparing the next two proofs.}
Pick two regular values $a<b$ of $f$ such that
there is only one critical value $c$ in between them
which, moreover, is their mean $c=\frac{a+b}{2}$.
By  $f:f^{-1}[a,b]\to[a,b]$ we denote the restriction to the
(compact) domain $f^{-1}[a,b]$.
Let $\varphi$ be the corresponding (local downward) gradient flow.
Furthermore, suppose that $x$ is an \emph{isolated} critical point of $f$.
Let $(N,L)$ be defined by~(\ref{eq:N-iso}) and~(\ref{eq:L-iso})
with constants $\eps\in(0,\frac{b-a}{2}]$ and $\tau\ge 1$.

\begin{lemma}[Shrink to critical point]\label{le:shrink-to-pt}
Let $U_x$ be a neighborhood in $f^{-1}[a,b]$
of the isolated critical point $x$ of $f$.
Then there are constants $\eps_*>0$ and $\tau_*\ge 1$ such that
$N_x^{\eps_*,\tau_*}$ is contained in $U_x$.
\end{lemma}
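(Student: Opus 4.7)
The plan is to argue by contradiction, exploiting the path-connectedness built into the notation $\{\ldots\}_x$ together with compactness of $M$ and the isolatedness of $x$ as a critical point.

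First I would shrink the given neighborhood: using that $x$ is an isolated critical point, pick an open set $V$ with $x\in V$, $\overline{V}\subset U_x$, and $\overline{V}\cap\Crit f=\{x\}$. It then suffices to produce $\eps_*,\tau_*$ with $N_x^{\eps_*,\tau_*}\subset V$. Suppose no such pair exists; then setting $\eps_n:=1/n$ and $\tau_n:=n$ there is, for each $n$, a point $p_n\in N_x^{\eps_n,\tau_n}\setminus V$. Since $N_x^{\eps_n,\tau_n}$ is by definition the path component of $x$ in the closed set~(\ref{eq:N-iso}), I choose a continuous path $\gamma_n$ in that component from $x\in V$ to $p_n\notin V$. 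The standard argument (set $t^*_n:=\inf\{t\in[0,1]:\gamma_n(t)\notin V\}$ and use openness of $V$) produces a crossing point $q_n:=\gamma_n(t^*_n)\in\partial V\cap N_x^{\eps_n,\tau_n}$. Compactness of $\partial V$ then yields, along a subsequence, a limit $q_n\to q_\infty\in\partial V$.

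Next I would pin down the level of $q_\infty$. From $q_n\in N_x^{\eps_n,\tau_n}$ one has $f(q_n)\le c+\eps_n$, and since $f$ is non-increasing along the downward gradient flow, also $f(q_n)\ge f(\varphi_{\tau_n}q_n)\ge c-\eps_n$; passing to the limit gives $f(q_\infty)=c$.

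The hard part, and the main obstacle, is concluding that $q_\infty$ is itself a critical point; this is precisely where the coupling $\eps_n\to 0$ with $\tau_n\to\infty$ pays off. If $\nabla f(q_\infty)\ne 0$, then from $\frac{d}{dt}\big|_{t=0}f(\varphi_t q_\infty)=-\Abs{\nabla f(q_\infty)}^2<0$ one obtains fixed constants $T>0$ and $\delta>0$ with $f(\varphi_T q_\infty)\le c-2\delta$. By continuity of $\varphi_T$ and $f$, for large $n$ one has $f(\varphi_T q_n)\le c-\delta$; since $\tau_n\to\infty$ forces $\tau_n\ge T$ eventually, monotonicity of $f\circ\varphi$ yields $f(\varphi_{\tau_n}q_n)\le c-\delta<c-\eps_n$, contradicting $q_n\in N_x^{\eps_n,\tau_n}$. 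Hence $q_\infty\in\Crit f\cap\overline{V}$; but $q_\infty\in\partial V$ precludes $q_\infty=x$, contradicting $\overline{V}\cap\Crit f=\{x\}$.
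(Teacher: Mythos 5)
Your proof is correct and follows essentially the same strategy as the paper's: argue by contradiction along $\eps_n\searrow 0$, $\tau_n\nearrow\infty$, extract a convergent sequence of points in $N_x^{\eps_n,\tau_n}$ that stay away from $x$, squeeze the $f$-value of the limit to $c$, and use the gradient-flow decay of $f$ to force the limit to be a critical point distinct from $x$, contradicting isolatedness. The only cosmetic differences are that the paper places its auxiliary points in the compact critical-point-free set $K=f^{-1}[a,b]\setminus(U\cup V)$ rather than on $\partial V$, and closes by observing $f(\varphi_t p)\equiv c$ on $[0,1]$ rather than deriving a quantitative drop $c-\delta$ after time $T$.
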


\begin{proof}
Write the set of critical points of $f:f^{-1}[a,b]\to[a,b]$
as disjoint union $\{x\}\mathop{\dot{\cup}} C$
of two (compact) subsets.
Pick disjoint open neighborhoods $U$ of $x$ and $V$ of $C$.
Suppose that $U\subset U_x$; otherwise, replace $U$ by $U\cap U_x$.
Observe that the complement $K$ of $U\cup V$ is compact and
contains no critical points.

Now suppose by contradiction that the set $N_x^{\eps,\tau}$ was
not contained in $U$ for all $\eps,\tau$. Then there are sequences
$\eps_\nu\searrow 0$ and $\tau_\nu\nearrow\infty$ such that
$N_\nu:=N_x^{\eps_\nu,\tau_\nu}\not\subseteq U$, that is
$N_\nu\setminus U\not= \emptyset$, for every $\nu\in\N$.
More is true, namely\footnote{
  Otherwise $N_\nu$ must contain at least one element of $V$
  and there would be the inclusion
  $N_\nu\subset U \mathop{\dot{\cup}} V$.
  The latter provides the first of the two identities
  $N_\nu=N_\nu\cap(U \mathop{\dot{\cup}}
  V)=(N_\nu\cap U) \mathop{\dot{\cup}} (N_\nu\cap V)$.
  As also $N_\nu\cap U\ni x$ is non-empty,
  this contradicts connectedness of $N_\nu$.
  }
$$
     N_\nu\setminus (U\cup V)\not= \emptyset.
$$
Thus there is a sequence
$p_\nu\in N_\nu\setminus (U\cup V)\subset K$
and a subsequence, still denoted by $p_\nu$,
that converges to some point $p\in K$, as $\nu\to\infty$;
see Figure~\ref{fig:fig-shrinking-lemma}.
\begin{figure}
  \centering
  \includegraphics
                             {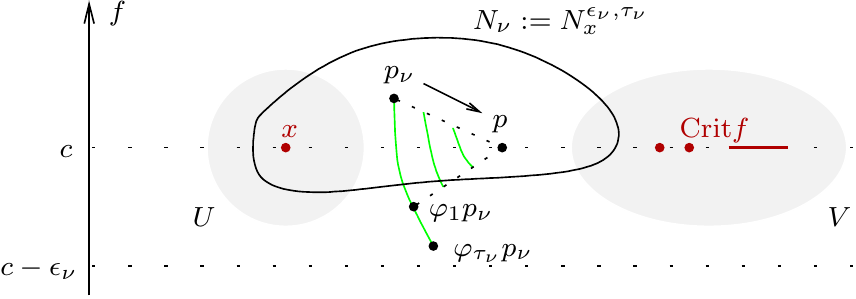}
  \caption{Proof of the shrinking Lemma~\ref{le:shrink-to-pt}}
  \label{fig:fig-shrinking-lemma}
\end{figure}
The fact that $p_\nu\in N_\nu$ implies firstly that
$f(p)\le c$, since $f(p_\nu)\le c+\eps_\nu$,
and secondly that $f(\varphi_1 p)\ge c$, since $f(\varphi_1 p_\nu)
\ge f(\varphi_{\tau_\nu} p_\nu)\ge c-\eps_\nu$.
Since a downward gradient flow strictly decreases $f$, except at critical
points, we get for $t\in[0,1]$ that
$$
     c\ge f(p)\ge f(\varphi_t p)\ge  f(\varphi_1 p)\ge c,
$$
i.e. $f(\varphi_t p)\equiv c$, $\forall t\in[0,1]$.
So $p\in K$ is a critical point of $f$. Contradiction.
\end{proof}

\begin{theorem}[Conley pair]\label{thm:Conley-pair}
The pair $(N,L)$ defined by~(\ref{eq:N-iso}--\ref{eq:L-iso})
is a Conley pair for an isolated
fixed point $x$ of $\varphi$ for all $\eps>0$ small and $\tau\ge 1$ large.
\end{theorem}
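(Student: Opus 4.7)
The plan is to verify the four defining properties (i)--(iv) of Definition~\ref{def:index-pair} in turn, noting that only (ii) actually requires the smallness of $\eps$ and largeness of $\tau$; properties (i), (iii), (iv) will hold for any $\eps>0$ and $\tau\ge 1$.

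For (i), both inequalities defining $N$ hold with equality at the fixed point $x$, and $f(\varphi_{2\tau}x)=c>c-\eps$ excludes $x$ from $L$. To see $x\in\interior{N}$, I would exhibit a small connected open neighborhood $V$ of $x$ on which $f<c+\eps$ and $f\circ\varphi_\tau>c-\eps$ hold strictly by continuity; such a $V$ lies in the closed set $\tilde N:=\{p:f(p)\le c+\eps,\,f(\varphi_\tau p)\ge c-\eps\}$ and, being connected and containing $x$, lies in the path component $N$. For (ii), I would apply the Shrinking Lemma~\ref{le:shrink-to-pt}: isolatedness of $x$ in $\Crit f$ provides a neighborhood $U_x$ of $x$ free of other critical points, and choosing $\eps,\tau$ from the lemma yields $N\subset U_x$, hence $N\cap\Fix\varphi=\{x\}$. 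Property (iii) is immediate from monotone decrease of $f$ along downward gradient lines: $p\in L$ gives $f(\varphi_{2\tau+t}p)\le f(\varphi_{2\tau}p)\le c-\eps$, so $\varphi_t p\in L$ whenever it lies in $N$.

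The main work is (iv). Given $p\in N$ and $T>0$ with $\varphi_T p\notin N$, I would set $\sigma:=\sup\{s\in[0,T]:\varphi_{[0,s]}p\subset N\}$. Closedness of $N$ together with continuity of $\varphi$ makes the supremum attained, so $\varphi_{[0,\sigma]}p\subset N$; the hypothesis forces $\sigma<T$. The task then reduces to showing $\varphi_\sigma p\in L$, i.e.\ $g(\sigma)\le c-\eps$ for $g(s):=f(\varphi_{s+\tau}p)$. The key observation I would use throughout is that monotone decrease of $f$ along the flow already guarantees $f(\varphi_s q)\le c+\eps$ for any $q\in N$ and $s\ge 0$, so the \emph{only} way a flow point can escape $\tilde N$ is via the lower bound failing, that is $g<c-\eps$.

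To conclude I would extract a sequence $t_n\searrow\sigma$ with $\varphi_{t_n}p\notin N$; for each $t_n$, either $\varphi_{t_n}p\notin\tilde N$ outright (giving $g(t_n)<c-\eps$), or $\varphi_{t_n}p$ lies in a different path component of $\tilde N$, in which case the continuous path $s\mapsto\varphi_s p$ from $\varphi_\sigma p\in N$ to $\varphi_{t_n}p$ must still exit $\tilde N$ at some intermediate time $u_n\in(\sigma,t_n)$, again giving $g(u_n)<c-\eps$. Either way I produce times approaching $\sigma$ with $g$ strictly below $c-\eps$, so continuity of $g$ forces $g(\sigma)\le c-\eps$; combined with $g(\sigma)\ge c-\eps$ from $\varphi_\sigma p\in\tilde N$, this yields $f(\varphi_{2\tau+\sigma}p)\le g(\sigma)=c-\eps$ and hence $\varphi_\sigma p\in L$. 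The delicate point I anticipate is precisely this interplay between $\tilde N$ and its path component $N$ via continuity of the flow; everything else reduces cleanly to monotone decrease.
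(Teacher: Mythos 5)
Your proof is correct. Parts (i)--(iii) essentially match the paper (for (ii) both invoke the Shrinking Lemma~\ref{le:shrink-to-pt}, for (iii) both use monotone decay of $f$ along $\varphi$; your phrase ``hold with equality at $x$'' in (i) should read ``hold strictly,'' which is precisely what you then exploit to pass to an open neighborhood). For (iv) you take a genuinely different route: you introduce the first exit time $\sigma=\sup\{s:\varphi_{[0,s]}p\subset N\}$ and deduce $\varphi_\sigma p\in L$ by a continuity/monotonicity squeeze at $\sigma$. The paper instead splits into $p\in L$ versus $p\in N\setminus L$ and performs an \emph{explicit} time computation: the trajectory through $p\in N\setminus L$ crosses $\{f=c-\eps\}$ at a unique time $T_*\in(2\tau,\tau+T)$; setting $\alpha:=T_*-2\tau$ and $\beta:=T_*-\tau$, it then characterizes the full intervals $\{s\ge 0:\varphi_s p\in N\}=[0,\beta]$ and $\{s>0:\varphi_s p\in L\}=[\alpha,\beta]$, from which any $\sigma\in[\alpha,\beta]$ works. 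Your soft argument is shorter; the paper's explicit computation buys the precise entry/exit profile that also underlies the ``no re-entry'' Remark~\ref{rem:no-re-entry}, used later in the thickening constructions. Two small points on your version: the alternative case you allow (``$\varphi_{t_n}p$ in a different path component of $\tilde N$'') never actually occurs --- once $f\circ\varphi_\tau$ drops strictly below $c-\eps$ it stays there by monotonicity, so the forward trajectory cannot return to $\tilde N$ after exiting --- though your handling of it is harmless; and you tacitly rely on $N$ being closed for the supremum to be attained, which does hold here because $c\pm\eps$ are regular values (Sard), so $\tilde N$ is a compact manifold with corners and its path components are closed.
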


\begin{remark}[No re-entry]\label{rem:no-re-entry}
Since we work with a downward \emph{gradient}
flow $\varphi$ the function $f$ decays along trajectories.
Now observe that a point which leaves $N$ will precisely $\tau$ time units later
run through the level $c-\eps$. But $N$ itself sits strictly above that level.
Therefore a point which exits $N$ cannot re-enter.
\end{remark}

\begin{proof}[Proof of Theorem~\ref{thm:Conley-pair}]
We need to verify properties~(i--iv) in Definition~\ref{def:index-pair}.

(i)~Because $f(x)=c$ and the critical point $x$ is a fixed point of
$\varphi$ it is clear that $x\in N$ by definition~(\ref{eq:N-iso}).
Since $f$ and $f\circ \varphi_\tau$ are continuous $x$ lies in the
interior of $N$. One has $x\notin L$, because $f(\varphi_{2\tau} x)=f(x)=c$.

(ii)~True by the shrinking Lemma~\ref{le:shrink-to-pt}. Here
isolatedness of $x$ enters.

(iii)~As $f$ decreases along $\varphi$,
the assumption in~(iii) is equivalent to
\begin{equation}\label{eq:ass-iii-altern}
     p\in L\quad\wedge\quad \varphi_t p\in N,
\end{equation}
for some $t\ge 0$. This implies that $\varphi_tp\in L$:
Indeed $\varphi_tp\in N$ by assumption and
$$
     f(\varphi_{2\tau}\varphi_t p)\le f(\varphi_{2\tau} p)\le c-\eps.
$$
Step one uses $t\ge 0$ and that $f$ decreases along $\varphi$.
Step two holds since $p\in L$.

(iv)~Suppose $p\in N$ and $\varphi_T p\notin N$ for some $T>0$.
There are two cases.
\noindent
Case 1: $p\in L$. Pick $\sigma=0$.

\noindent
Case 2: $p\in N\setminus L$. Note that $N\setminus L$ is open in $N$.
We need to find a time $\sigma\in(0,T)$ such that
$\varphi_\sigma p\in L$ and $\varphi_{[0,\sigma]} p\subset N$.
By~(ii) the only critical point in $N$ is $x$.
The assumptions imply firstly that $p$ is not a critical point,
but is connected to $x$ inside $N$ through a continuous path,
and secondly that
\begin{equation*}
     f(p)\le c+\eps,\qquad
     f(\varphi_{2\tau} p)>c-\eps,\qquad
     f(\varphi_{\tau+T} p)<c-\eps.
\end{equation*}
We will show that these \textbf{three inequalities} imply that,
firstly, there is a unique time $\alpha> 0$ until which the trajectory
through $p\in N\setminus L$ remains in $N$ and at which it
enters $L$ and, secondly, that there is a unique time
$\beta\in(\alpha,T)$ at which the trajectory leaves $L$, hence
by~(iii) simultaneously $N$, forever (Remark~\ref{rem:no-re-entry}).
Given $\alpha$ and $\beta$, any $\sigma\in[\alpha,\beta]
\subset(0,T)$ satisfies the conclusion of~(iv).

To define the entrance time $\alpha>0$ observe that by inequalities
two and three, 
together with the fact that $f$ decays along $\varphi$, the trajectory through
$p\in N\setminus L$ runs through the level set $\{f=c-\eps\}$ at a
unique time $\mbf{T_*}\in(2\tau,\tau+T)$. Set
$$
     \alpha:=T_*-2\tau>0
$$
to obtain
$
          c-\eps
          =f(\varphi_{T_*}p)
          =f(\varphi_{2\tau+\alpha}p)
$.
To get $T_*=2\tau+\alpha=\tau+\beta$ define
$$
     \beta:=\alpha+\tau\ge\alpha+1.
$$
So the identity reads
$
          c-\eps
          =f(\varphi_{\tau+\beta}p)
$.
Thus $\beta<T$ by inequality three.

It remains to show, firstly, that $\alpha>0$ is the unique time
at which the trajectory through $p\in N\setminus L$ enters $L$ and at
least until which it lies in $N$ and, secondly, that $\beta$ is the
unique time when the trajectory leaves $L$, thus $N$.
More precisely, we show, firstly, that $\varphi_s p\in N$ for some $s\ge 0$
if and only if $s\in[0,\beta]$ and, secondly, that $\varphi_s p\in L$
for some $s>0$ if and only if $s\in[\alpha,\beta]$.
\newline
\textsc{Assertion 1.}
Pick $s\in[0,\beta]$. Then
$
     f(\varphi_s p)
     \le f(p)
     \le c+\eps
$
since $p\in N$. Furthermore, note that
$\tau+s\le\tau+\beta=T_*$.
So
$
     f(\varphi_\tau(\varphi_s p))
     \ge f(\varphi_{T_*} p)
     =c-\eps.
$
Moreover, via $[0,s]\ni t\mapsto \varphi_t p$, the point
$\varphi_s p$ path-connects inside $N$ to $p$ which in turn path-connects
inside $N$ to $x$ by definition of $p\in N$.
This proves that $\varphi_s p\in N$.
Vice versa, assume $\varphi_s p\in N$ for some $s\ge 0$.
The desired inequality $s\le \beta$ is equivalent to $s+\tau\le T_*$.
As $f(\varphi_{T_*} p)=c-\eps$, the latter inequality follows from
the consequence $f(\varphi_{\tau+s} p)\ge c-\eps$ of the assumption
$\varphi_s p\in N$ and the gradient flow property that $f$ decreases
along the trajectory.
\newline
\textsc{Assertion 2.}
Pick $s\in[\alpha,\beta]$. Then $\varphi_sp\in N$
by assertion 1. It remains to show
$
     f(\varphi_{2\tau}(\varphi_sp))
     \le c-\eps
$.
This holds true since $c-\eps=f(\varphi_{T_*} p)$
and $2\tau+s\ge 2\tau+\alpha=T_*$
by choice of $s$ and definition of $\alpha$.
Vice versa, assume $\varphi_sp\in L$
for some $s>0$. Then we get the two inequalities
$
     f(\varphi_\tau(\varphi_sp))
     \ge c-\eps
$
and
$
     f(\varphi_{2\tau}(\varphi_sp))
     \le c-\eps
$
by definition of $L$. If $s>\beta$, equivalently
$s+\tau>\beta+\tau=T_*$, we get
$
     f(\varphi_{s+\tau}p)
     < f(\varphi_{T_*}p)
     =c-\eps
$ contradicting inequality one.
In the case $s\in(0,\alpha)$ we get
$
     f(\varphi_{2\tau+s}p)
     >f(\varphi_{T_*}p)
     =c-\eps
$ contradicting inequality two.
\end{proof}

\subsubsection*{Thickenings of (un)stable manifolds via Conley pairs}
\begin{proposition}\label{prop:amb-thick}
Suppose $f$ is a $C^2$ function on a closed Riemannian manifold $(M,g)$
with isolated, thus finitely many, critical points,
say $x_1,\dots,x_\ell$. Then
\begin{itemize}
\item[\rm (i)]
  there is an open cover
  $\{\Ww_i\}_{i=1}^\ell$ of $M$ by nullhomotopic
  \textbf{thickenings of the unstable manifolds},
  that is each $\Ww_i$ is open in $M$,
  contains the unstable manifold $W_i$ of $x_i$, and is
  nullhomotopic to $x_i$;
\item[\rm (ii)]
  there is an open cover $\{\Uu_i\}_{i=1}^\ell$ of $M$
  where each $\Uu_i$ is ambient\,\footnote{
    see Definition~\ref{def:amb-LS-cat}
    }
  nullhomotopic to $x_i$ (and covers 'large parts' of the unstable manifold).
\end{itemize}
Thickenings corresponding to critical points on the same level set are
pairwise disjoint. Furthermore, there are analogous open covers
$\{\Ww_i^*\}_{i=1}^\ell$ and $\{\Uu_i^*\}_{i=1}^\ell$ corresponding to
the stable manifolds.
\end{proposition}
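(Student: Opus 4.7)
My plan is to use the Conley pair construction of Theorem~\ref{thm:Conley-pair} together with the Shrinking Lemma~\ref{le:shrink-to-pt} to cut out small disjoint blocks around each critical point and then thicken the unstable manifolds by flowing these blocks forward. I focus on~(i); the ambient version~(ii) is obtained by truncating the forward orbit at some large but finite time, and the stable-manifold analogues $\Ww_i^*$ and $\Uu_i^*$ follow by applying the whole construction to $-f$ in place of $f$, since the stable manifold of $\varphi$ is the unstable manifold of the upward flow $-\varphi$.

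Using the Shrinking Lemma, choose parameters $\eps_i>0$ and $\tau_i\ge 1$ so that the Conley block $N_i:=N^{\eps_i,\tau_i}_{x_i}$ sits inside a preassigned open coordinate chart $U_{x_i}$ around $x_i$; because $\Crit f$ is finite, the charts can be taken pairwise disjoint, so the blocks $N_i$ themselves are pairwise disjoint. Now set
\begin{equation*}
     \Ww_i\;:=\;\bigcup_{t\ge 0}\varphi_t(\interior N_i).
\end{equation*}
Openness is immediate. The inclusion $W^u(x_i)\subseteq\Ww_i$ holds because any $p\in W^u(x_i)$ has $\alpha$-limit $x_i$, hence $\varphi_{-T}p\in\interior N_i$ for $T$ large enough and therefore $p=\varphi_T(\varphi_{-T}p)\in\Ww_i$. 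The same $\alpha$-limit argument, which is dual to the $\omega$-limit argument in the proof of Theorem~\ref{thm:LS} and uses only compactness of $M$ plus discreteness of $\Crit f$, applied to an arbitrary $p\in M$ shows that the $\Ww_i$ cover~$M$.

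The main step, and the expected main obstacle, is the nullhomotopy of the inclusion $\Ww_i\hookrightarrow M$ to $x_i$. I plan a two-stage concatenation: first rewind along $\varphi$ to push $\Ww_i$ back into $N_i$, then contract $N_i$ to $x_i$ inside the chart $U_{x_i}\subset M$. For the first stage set
\begin{equation*}
     T(p)\;:=\;\inf\bigl\{t\ge 0:\varphi_{-t}p\in N_i\bigr\},
     \qquad
     F_1(\lambda,p)\;:=\;\varphi_{-\lambda T(p)}p.
\end{equation*}
The delicate point is continuity of $T$. By no re-entry (Remark~\ref{rem:no-re-entry}) the set $\{t\ge 0:\varphi_{-t}p\in N_i\}$ is a single closed interval, so $T(p)$ is the unique first-hit time and $\varphi_{-T(p)}p$ lies on the entrance hypersurface $\partial N_i\setminus L_i$ whenever $T(p)>0$; continuity then follows from transversality of $\varphi$ to that hypersurface, arranged by perturbing $\eps_i$ through regular values of~$f$. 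After the first stage the image lies in $N_i\subset U_{x_i}$, and the second stage is the straight-line contraction of $N_i$ to $x_i$ inside the Euclidean chart $U_{x_i}$, performed in~$M$. Concatenating the two stages yields the desired nullhomotopy $F\colon[0,1]\times\Ww_i\to M$.

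For~(ii), truncate the forward orbit at a uniform finite time: put $\Uu_i:=\bigcup_{t\in[0,T^*]}\varphi_t(\interior N_i)$, where $T^*$ is chosen larger than the first-hit function $p\mapsto\inf\{t\ge 0:\varphi_{-t}p\in\bigcup_j\interior N_j\}$, which is upper semicontinuous hence bounded on the compact manifold~$M$. Then the $\Uu_i$ still cover $M$, and the same two-stage recipe gives an ambient contraction to $x_i$. Finally, disjointness of thickenings for critical points on a common level $c$ is arranged by taking the $N_i$ to lie in disjoint pieces of the thin level band $\{c-\eps\le f\le c+\eps\}$; an overlap $\Ww_i\cap\Ww_j$ in that case would force a single forward trajectory to pass through both $\interior N_i$ and $\interior N_j$ while staying in the band, which the no-re-entry principle combined with the flow-selector technology sketched in the introduction suffices to exclude.
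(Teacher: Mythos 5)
Your treatment of part~(i) is essentially the paper's argument: the same thickening $\Ww_i=\bigcup_{t\ge 0}\varphi_t(\interior N_i)$, the same two-stage nullhomotopy (rewind for a $p$-dependent first-hit time, then contract $N_i$ radially inside its chart), and the same transversality mechanism for continuity of that first-hit time. One genuine mislabelling: for $p\in\Ww_i\setminus N_i$ the backward orbit first meets $N_i$ on the \emph{exit locus} $N_i^-$, which is a subset of $L_i$ (it sits inside $\varphi_{\tau_i}^{-1}\{f=c_i-\eps_i\}$ and hence satisfies $f(\varphi_{2\tau_i}\cdot)\le c_i-\eps_i$), not on $\partial N_i\setminus L_i$ as you claim. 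The conclusion you need (transversality, hence continuity) still holds since $N_i^-$ lies in the $\varphi_{\tau_i}$-preimage of a regular level set, but the identification should be corrected. Your disjointness remark is also a bit loose --- the trajectory need not ``stay in the band''; what one uses is that after exiting $N_j$ through $L_j$ the value of $f$ drops below $c-\eps_j$ and then only decreases, so it can never again reach the entrance locus $N_i^+$ at level $c+\eps_i$.

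Part~(ii) has a real gap, and it is exactly the one the paper flags in its ``finite exhaustion'' footnote. Your set $\Uu_i:=\bigcup_{t\in[0,T^*]}\varphi_t(\interior N_i)$ does cover $M$ for a uniform $T^*$, but the ``same two-stage recipe'' does not give an \emph{ambient} nullhomotopy. Stage one of the recipe in~(i) flows each point backward for a $p$-dependent time $T(p)$; this is defined only on $\Ww_i$, not as a family of maps $M\to M$, so it is not ambient. If instead you try the ambient family $\{\varphi_{-t}\}_{t\in[0,T^*]}$, then because $\interior N_i\subset\Uu_i$ the image $\varphi_{-T^*}(\Uu_i)$ contains $\varphi_{-T^*}(\interior N_i)$, a set that has crawled far out of the coordinate chart along the stable manifold of $x_i$, so it does not land in $N_i$ and Step~(0) cannot be applied. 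The fix, which is the actual content of the paper's proof of~(ii), is to take $\Uu_i$ to be a \emph{single} time-$T_i$ image $\varphi_{T_i}(\interior N_i)$: this does return to $\interior N_i$ under the ambient family $\{\varphi_{-t}\}_{0\le t\le T_i}$. But then covering becomes the nontrivial step, and a uniform $T^*$ does not work --- the paper explicitly shows this and instead orders the critical points by level and defines $T_i=\Tt_i+T_{i+1}$ recursively, so that the backward orbit of each point is guaranteed to sit inside the appropriate $\interior N_j$ exactly at time $T_j$. You need an argument of that kind, not merely a uniform bound on the first-hit time.
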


\begin{proof}
The proof takes three steps (0), (i), (ii).
Step (0) is taken from~\cite{Robbin:2002}.

\textbf{(0)}~\textit{Each critical point $x_i$ has an ambient nullhomotopic
open neighborhood $V_i$}:
As the critical points $x_i$ are isolated,
there are pairwise disjoint local coordinate charts
$\{(\psi_i,U_i)\}_{i=1}^\ell$ such that $U_i$ contains no critical
point except $x_i$.
Pick an open ball about $\tilde x_i:=\psi_i(x_i)$ contained in
$\tilde U_i:=\psi_i(U_i)$ and another open ball $\tilde V_i$ about
$\tilde x_i$ of, say, half the radius of ball one.
There is a simple radial homotopy of smooth
maps that deforms the closure of the smaller ball to its center $\tilde x_i$ while
the points in the complement of the larger ball remain fixed; just
stretch the annulus. Set $V_i:={\psi_i}^{-1}\tilde V_i$.
Note that by construction the closures ${\widebar{V}}_{{\mspace{-5mu} i}}$ are
pairwise disjoint and also ambient nullhomotopic.\footnote{
  At this point one might be tempted to define the thickening $\Ww_i$
  as the set that is exhausted by $V_i$ in forward time, that is
  $\varphi_{[0,\infty)} V_i$, then homotop that set back
  into ${\widebar{V}}_{{\mspace{-5mu} i}}$
  followed by the contraction to $x_i$ from Step~(0).
  But how to \emph{continuously}
  deform $\varphi_{[0,\infty)} V_i$ back into the closure of $V_i$?
  The obvious deformation of just following the backward flow lines
  until meeting ${\widebar{V}}_{{\mspace{-5mu} i}}$
  may lack continuity due to the possibility that some flow lines may
  leave 
  and re-enter again.
  }

\textbf{(i)} A way to control the problem
of multiple entrance and exit times is to use a Conley pair
$(N_i,L_i)$ for $x_i$ as provided by Theorem~\ref{thm:Conley-pair}.
\begin{figure}
  \centering
  \includegraphics
                             {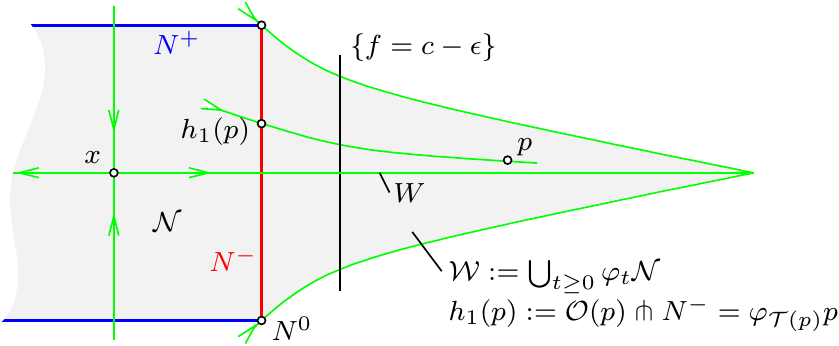}
  \caption{Thickening $\Ww$ of $W=W^u(x)$ as Conley block forward exhaustion}
  \label{fig:fig-Brexit}
\end{figure}
By isolatedness of $x_i$ the shrinking Lemma~\ref{le:shrink-to-pt}
applies and we may suppose that $N_i\subset V_i$.
Since the $V_i$ are pairwise disjoint, so are the $N_i$.
Actually we need here only the 'outmost' points of the exit set $L_i$,
namely, the so-called \textbf{exit locus}
$$
     {\color{red} N_i^-}
     :=\left(\{f<c_i+\eps_i\}\cap{\varphi_{\tau_i}}^{-1}\{f=c_i-\eps_i\}\right)_{N_i}
     ,\quad c_i:=f(x_i),
$$
that consists of those points of $N_i=N^{\eps_i,\tau_i}_{x_i}$
which lie below the upper level set and reach the lower one
precisely in time $\tau_i$.
Here $(\dots) _{N_i}$ selects those connected
components that lie in $N_i$.
Similarly there is the \textbf{entrance locus}
\begin{equation}\label{eq:N^+}
     {\color{blue} N_i^+}
     :=\left\{ p\in\{f=c_i+\eps_i\} \mid
     f(\varphi_{\tau_i}p)>c_i-\eps_i\right\}_{N_i}
\end{equation}
and the \textbf{bounce off locus}
\begin{equation*}
\begin{split}
     N_i^0
   &:=\left\{ p\in N_i\cap\{f=c_i+\eps_i\} \mid
     f(\varphi_{\tau_i}p)=c_i-\eps_i\right\}\\
   &\,\,=\left(\{f=c_i+\eps_i\}\pitchfork
     {\varphi_{\tau_i}}^{-1}\{f=c_i-\eps_i\}\right)_{N_i}.
\end{split}
\end{equation*}
Note that $N_i^-$ and $N_i^+$ are open subsets of
the hypersurfaces ${\varphi_{\tau_i}}^{-1}\{f=c_i-\eps_i\}$ and
$\{f=c_i+\eps_i\}$, respectively, whereas $N_i^0$ 
consists of components of their transverse intersection.
So the $N_i^\pm$ are non-compact hypersurfaces of $M$
and $N_i^0$ is a closed codimension 2 submanifold. Let $\Nn_i$ be the
interior and $\boundary{N}_i$ the topological boundary of the Conley
block $N_i$. Figure~\ref{fig:fig-Brexit} illustrates the partitions
\begin{equation}\label{eq:N-partition}
     \boundary{N}_i={\color{blue} N_i^+} \mathop{\dot{\CUP}} N_i^0
     \mathop{\dot{\CUP}}{\color{red} N_i^-} ,
     \qquad
     N_i=\Nn_i\mathop{\dot{\CUP}}\boundary{N}_i ,
     \qquad
     \Nn_i:=\interior{N}_i.
\end{equation}

Define the desired thickening to be the forward exhaustion of the
interior $\Nn_i$ of the Conley block $N_i$, namely
$$
     \Ww_i:=\varphi_{[0,\infty)} \Nn_i
     :=\bigcup_{t\ge 0} \varphi_t \Nn_i.
$$
The set $\Ww_i$ is open in $M$ and contains the whole unstable manifold
$W_i$ along which it extends. A homotopy $h_\lambda:\Ww_i\to M$,
$\lambda\in[0,1]$, between the inclusion $h_0:\Ww_i\hookrightarrow M$
and a map $h_1:\Ww_i\to M$ whose image lies in $N_i$ is given by
\begin{equation*}
\begin{split}
     h:\I\times\Ww_i\to M,\quad(\lambda,p)\mapsto
     \begin{cases}
        p&\text{, $p\in\Ww_i\cap N_i$,}
        \\
        \varphi_{\lambda\Tt(p)} p&\text{, $p\in\Ww_i\setminus N_i$.}
     \end{cases}
\end{split}
\end{equation*}
Here $\Tt(p)<0$ is the arrival time of $p\in\Ww_i\setminus N_i$
at the exit locus $N_i^-$. It is well defined since such $p$ comes
from the interior of $N_i$ by definition of $\Ww_i$, so it must have
left through the exit set $L_i$, thus through $N_i^-$, by
property~(iv) in Definition~\ref{def:index-pair}.
This is illustrated by Figure~\ref{fig:fig-Brexit}
in terms of the orbit $\Oo(p)$ through $p$.
That orbit is orthogonal to the lower level set $\{f=c_i-\eps_i\}$,
hence still transverse to the time $-\tau_i$ copy $N_i^-$.
But this means that the arrival time $\Tt(p)$
varies continuously in $p$.
The piecewise definition of $h$ also matches continuously:
For a point $p\in\Ww_i\setminus N_i$ close to the
other set $\Ww_i\cap N_i=\Nn_i\cup N_i^-$,
hence close to $N_i^-$, the arrival time approaches 0,
so $\varphi_{\Tt(p)} p$ approaches $p$.

The desired contraction is then given by the homotopy $h$ from
$\Ww_i$ to $N_i\subset V_i$ followed by the ambient nullhomotopy
in Step~(0) of $V_i$ onto the critical point $x_i$.
The collection $\Ww_1,\dots,\Ww_\ell$ covers $M$
since already the unstable manifolds do.
Those $\Ww_i$ corresponding to critical points on the same
level, say $c$, are pairwise disjoint: Indeed the $V_i$, thus the $\Nn_i$, are
and every point $p$ of $\Ww_i$ outside $\Nn_i$
has left through the exit locus $N_i^-\subset L_i$.
Thus $p$ has crossed or will cross level $c-\eps_i$
by definition of $L_i$. But such flow line cannot enter any
of the other $\Ww_j$'s since their entrance loci $N_j^+$
lie on the higher level $c+\eps_j$.

\begin{figure}
  \centering
  \includegraphics
                             {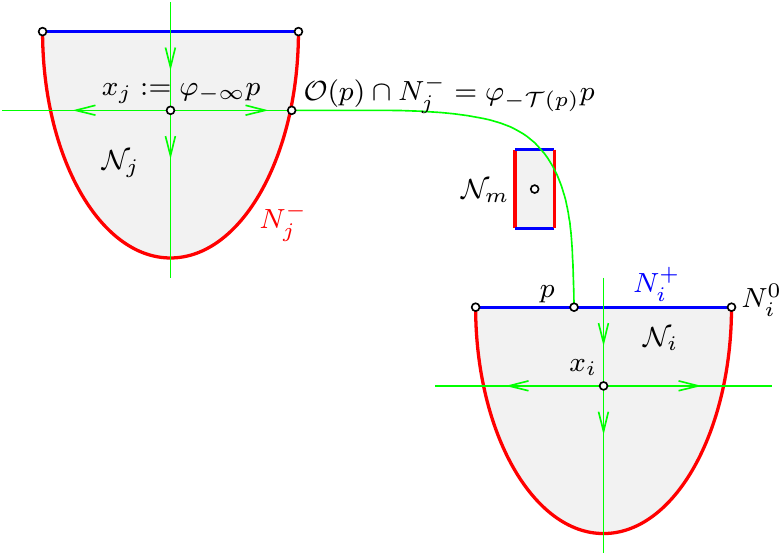}
  \caption{Backward entrance time $\Tt_i^+:N_i^+\to\R_+$
                 and $N_{i}$ for $f=-u_1^2+u_2^3$}
  \label{fig:fig-T_p}
\end{figure}

\textbf{(ii)}~After handy first tries\footnote{
  \emph{Infinite exhaustion.}
  To construct \emph{ambient nullhomotopic} open sets $\Uu_i$ that
  cover $M$ one feels that the required map $h_1:M\to M$ homotopic
  to the identity, see Definition~\ref{def:amb-LS-cat}, should come
  from the flow $\{\varphi_t\}$ provided by the problem. It is tempting to
  try the \emph{infinite} time exhaustion
  $\varphi_{[0,\infty)} \Nn_i$ from~(i).
  Unfortunately, at infinity there are fixed points of $\varphi$ which cause
  cracking/discontinuity of natural (flow induced) \emph{ambient} homotopies.

  \emph{Finite exhaustion.}
  So let's try $\varphi_{[0,T]} \Nn_i$ for some \emph{finite}
  time $T\ge 0$. For large $T$ this set covers a major part of the unstable
  manifold. Good. But applying the backward flow $\varphi_{-T}:M\to M$
  does not, in general, move the set back to $\Nn_i$!
      Indeed as $\varphi_{[0,T]} \Nn_i$ contains $\Nn_i$
      the pre-image $\varphi_{-T} \varphi_{[0,T]} \Nn_i$
      contains $\varphi_{-T} \Nn_i$ -- a set that
      crawls up the stable manifold of $x_i$.

  \emph{Time-$T$ image.}
  The problem disappears if one tries as a candidate for
  $\Uu_i$ the image $\varphi_T \Nn_i$
  under just one time-$t$-map. This open set moves back correctly to $\Nn_i$
  under the backward flow $\varphi_{-t}$ when $t$ runs from $0$ to $T$.
  This set almost covers the unstable manifold for large
  $T$. But it not only stretches out along $W_i$, as $T$ grows,
  but also gets 'thinner'. Is there a uniform~$T$?
  }
we start from scratch and, for a change, emphasize stable
manifolds and backward flow in order to construct ambient
contractible sets $\Uu_i^*$ that crawl up the stable manifolds and are
of the form $\varphi_{-T_i}\Nn_i$. (Replace $f$ by $-f$ to get
$\Uu_i$.)
For each of the $\ell$ critical points $x_i$ of $f$ pick a Conley pair
$(N_i,L_i)$ as in Theorem~\ref{thm:Conley-pair}.
By finiteness of $\Crit f$ suppose that all of these pairs
are defined with respect to the same constants $\eps$ and $\tau$
chosen, in addition, such that $N_i\subset V_i$ by the Shrinking
Lemma~\ref{le:shrink-to-pt}. By Step~(0) the $N_i$ are pairwise
disjoint. As earlier $\Nn_i$ denotes the interior of $N_i$.
\\
For each critical point $x_i$ set $c_i:=f(x_i)$ and
consider the function
$$
     \Tt_i^+:N_i^+\to (0,\infty),
$$
as illustrated by Figure~\ref{fig:fig-T_p},
that maps a point $p$ of the entrance locus
${\color{blue} N_i^+}\subset\{f=c_i+\eps\}$
to the time $\Tt_i^+(p)$ at which the backward flow of $p$
meets the exit locus ${\color{red} N_j^-}$ associated to $p$'s asymptotic
origin~$x_j:=\varphi_{-\infty}p$.

\vspace{.1cm}\noindent
{\it Remark.}
Note that $\varphi_{-\Tt_i^+(p)}p$ lies in the boundary of the descending
disk $W^u_\eps(x_j)$, hence $\varphi_{-\mu-\Tt_i^+(p)}p$
lies in its interior, hence in $\Nn_j$, whenever $\mu>0$.

\vspace{.1cm}\noindent
The number $\Tt_i^+(p)$ is well
defined and finite, because the asymptotic backward limit
$\varphi_{-\infty}p:=\lim_{t\to-\infty}\varphi_t p$ exists and is a
critical point, say $x_j$, which sits inside the Conley block $N_j$.
Since any two Conley blocks are disjoint the time $\Tt_i^+(p)>0$ is
positive. Although the function $\Tt_i$ might be highly
discontinuous (perturb $p$ in Figure~\ref{fig:fig-T_p} slightly to the right),
it is bounded: The closures of $N_i^+$ and the finitely
many higher lying $N_j^-$ are all compact and, most importantly,
they contain no critical points by Theorem~\ref{thm:Conley-pair}.
For each critical point $x_i$ set
$$
     \Tt_i:=1+\sup_{N_i^+}\Tt_i^+\in[0,\infty)
$$
where the convention $\sup_{\emptyset}\Tt_i^+:=0$ takes care of local maxima.

\vspace{.1cm}\noindent
{\it Not a good idea.}
One might try for the required open ambient nullhomotopic cover
of $M$ the sets $\Uu_i^{*\prime}:={\varphi_{\Tt_i}}^{-1}\Nn_i$
(or utilize some uniform time, say $\Tt_*:=\max_i\Tt_i$, and try
$\Uu_i^{*\prime\prime}:={\varphi_{T_*}}^{-1}\Nn_i$).
The proof below will work -- except for the final argument,
case 2 b). The problem will be that with this choice
one does get back from $N_i^+$ to $N_j^-$, but the relevant set
to backward-enter $\Uu_j$
is ${\varphi_{T_j}}^{-1} N_j^-$ which sits way further in the past.
This indicates that one should define the sets $\Uu_i$ successively
according to the level of $x_i$ starting with the highest one
and adding 'extra backward time' in the definition of the
lower level $\Uu_i$'s.

\vspace{.1cm}\noindent
{\it Definition of $\Uu_1^*,\dots,\Uu_\ell^*$.}
Suppose there are $\ell$ critical points of $f$ and these
are enumerated such that $c_1=f(x_1)\le\dots\le c_\ell=f(x_\ell)$.
Set $T_{\ell+1}=0$ and define
$$
     T_i:=\Tt_i+T_{i+1}=\Tt_i+\dots+\Tt_\ell ,\qquad
     \Uu_i^*:={\varphi_{T_i}}^{-1}\Nn_i ,
$$
for every $i=1,\dots,\ell$.

\vspace{.1cm}\noindent
{\it We finish by verifying the required properties for the collection
of sets $\Uu_1^*,\dots,\Uu_\ell^*$.}
Any set $\Uu_i^*$ is open as the interior $\Nn_i$ of $N_i$ is open
and the map $\varphi_{T_i}:M\to M$ is continuous.
The family $h_i:=\{\varphi_t\}_{t\in[0,T_i]}$ applied to
$\Uu^*_i$ is an ambient homotopy from $\Uu_i^*$ to
$\Nn_i\subset V_i$, then apply Step~(0) to $V_i$ to arrive at $x_i$.
That those sets $\Uu_i^*$ which correspond to critical
points on the same level set are pairwise disjoint
follows as in Step~(i).
It remains to show that the sets $\Uu^*_1,\dots,\Uu_\ell^*$
cover $M$. To see this pick a point $p\in M$.
By closedness of $M$ and \emph{isolatedness} of $\Crit f$
the backward and forward asymptotic limits $\varphi_{\mp\infty}p$
exist and are critical points of $f$, say $x_j$ and $x_i$,
respectively. So
$$
     p\in W^u(x_j)\cap W^s(x_i).
$$
There are two cases.

\vspace{.1cm}\noindent
\textbf{Case 1} ($p\in \Nn_i$): As $p$, so by forward flow invariance
$\varphi_{T_i}p$, lies in the interior of
$W^s_\eps=W^s(x_i)\cap N_i$, it holds $\varphi_{T_i} p\in \Nn_i$.
So $p={\varphi_{T_i}}^{-1}(\varphi_{T_i}p)\in\varphi_{-T_i}(\Nn_i)=\Uu^*_i$.

\vspace{.1cm}\noindent
\textbf{Case 2} ($p\notin \Nn_i$):
Let $t_p\ge 0$ be the time when $p$ arrives at the entrance locus
$N_i^+$. There are two cases.
\textbf{a)}~If $T_i>t_p$, then
$$
     p={\varphi_{T_i}}^{-1}(\varphi_{T_i}p)\in{\varphi_{T_i}}^{-1}\Nn_i=\Uu^*_i
$$
and we are done. To see that $\varphi_{T_i}p\in\Nn_i$ notice that
$\varphi_{t_p}p\in N_i^+\cap W^s(x_i)=\p W^s_\eps$.
Hence at the larger time $T_i>t_p$ the point $\varphi_{T_i}p$
has moved from the boundary to the interior of the ascending disk
($-\nabla f$ is inward pointing), thus into $\Nn_i$.
\textbf{b)}~If $t_p\ge T_i$, i.e.
$t_p=\delta+T_i=\delta+\Tt_i+\dots+\Tt_{j-1}+T_j$
with $\delta\ge0$, then
$$
     p
     =\varphi_{-t_p}\underbrace{\varphi_{t_p}p}_{=:P}
     ={\varphi_{T_j}}^{-1}
       \bigl[\varphi_{-(\underbrace{\delta+\Tt_i+\dots+\Tt_{j-1}}_{\ge1+\Tt_i^+(p)})}
       \underbrace{P}_{\in N_i^+}\bigr]
     \in{\varphi_{T_j}}^{-1}\Nn_j=\Uu_j^* .
$$
Here we used the \textit{Remark} above to conclude
that the point in brackets $[\dots]$ lies in $\Nn_j$.
This concludes the proof of Proposition~\ref{prop:amb-thick}.
\end{proof}


\section{Lusternik-Schnirelmann theory}\label{sec:LS}
In this section we review proofs of the
inequalities~(\ref{eq:inequalities})
relating various lower bounds for the number
of critical points of a $C^2$ function $f$ on a closed manifold~$M$.
As a rule of thumb, Morse theory gives the strongest
lower bound, the sum of Betti numbers.
Exceptions confirming the rule include $\RP^2$; see~(\ref{eq:exception}).
But Morse theory is stronger (or equal) for simply connected
orientable closed manifolds using rational or real homology coefficients,
as detailed after~(\ref{eq:exception}).

\subsection{Lusternik-Schnirelmann categories}\label{sec:LS-cat}

\begin{definition}\label{def:LS-cat}
The \textbf{Lusternik-Schnirelmann category}
of a non-trivial topological space $X\not=\emptyset$, denoted by
$\cat(X)$, is the least integer $\ell\in\N$
such that there is an cover $U_1,\dots,U_\ell$ of $X$ by $\ell$
open nullhomotopic subsets $U_i\subset X$.\footnote{
  Definition of $\cat$ differs by $1$
  in the literature, e.g. the one in~\cite{cornea:2003a}
  is one less than ours.
  }
Such cover is called a \textbf{categorical cover}.
If there is no such (finite) cover, set $\cat(X):=\infty$.
The empty set is of category zero: By definition $\cat(\emptyset):=0$.
\end{definition}

\begin{remark}[Open versus closed covers]\label{rem:op-cl-cat}
If in the definition of the category one uses \emph{closed},
as opposed to open, sets $U_i$ one obtains the
\textbf{closed category} of $X$.
For paracompact Banach manifolds, hence for finite dimensional manifolds,
both definitions are equivalent; see e.g.~\cite[Prop.~1.10 \& App.~A]{cornea:2003a}.
\end{remark}

Note that if $W_1$ and $W_2$ are two components of a manifold,
then $\cat(W_1 \CUP W_2)=\cat(W_1)+\cat(W_2)$.
For connected\footnote{
  Assuming connectedness is crucial: The RHS is independent
  of the component~number.
  }
topological manifolds $W$ there is the non-trivial finiteness
estimate~\cite[Thm.~1.7]{cornea:2003a}
\begin{equation}\label{eq:cat-dim}
     \cat(W)\le 1+\dim W.
\end{equation}
The inequality is strict for all $n$-spheres with $n\ge 2$.

\subsubsection*{Ambient Lusternik-Schnirelmann category of manifolds}

\begin{definition}\label{def:amb-LS-cat}
Define the \textbf{ambient Lusternik-Schnirelmann category}
$\catamb(M)$ of a manifold $M$ the same way
as $\cat(M)$, but with \emph{nullhomotopic} replaced
by \emph{ambient nullhomotopic}:
  A subset $A\subset M$ is called
  \textbf{ambient nullhomotopic}
  if there is a differentiable map $h_1:M\to M$
  homotopic through such to the identity $h_0=\id_M:M\to M$ such that
  $h_1(A)=m$ for some $m\in M$.
\end{definition}

Clearly
$
     \catamb(M)\ge\cat(M)
$
as ambient nullhomotopic implies nullhomotopic.

\begin{example}[Nullhomotopic, but not ambient nullhomotopic]
\label{ex:cont-not-amb}
The open subset $U=\SS^2\setminus\{N\}$ of $\SS^2$, given by
the $2$-sphere minus the north pole, is a nullhomotopic subset,
but it is not ambient nullhomotopic.
\end{example}

In view of Proposition~\ref{prop:amb-thick}~(ii)
the proof of Theorem~\ref{thm:LS} also establishes

\begin{theorem}\label{thm:LS-amb}
Suppose $f$ is a $C^2$ function on a closed manifold $M$, then
$$
     \Abs{\Crit f}\ge \catamb(M).
$$
\end{theorem}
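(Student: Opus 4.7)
The plan is to mimic the gradient-flow proof of Theorem~\ref{thm:LS} given in the introduction, only substituting Proposition~\ref{prop:amb-thick}~(ii) for Proposition~\ref{prop:amb-thick}~(i) at the very last step.

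First I would reduce to the case where $\Crit f$ is finite: if not, there is nothing to prove. Enumerate the critical points as $x_1,\dots,x_\ell$ with $\ell=\Abs{\Crit f}$, pick a Riemannian metric on $M$, and consider the associated downward gradient flow $\varphi$. Then apply Proposition~\ref{prop:amb-thick}~(ii) to produce an open cover $\{\Uu_i\}_{i=1}^\ell$ of $M$ in which each $\Uu_i$ is \emph{ambient} nullhomotopic to the critical point $x_i$ in the sense of Definition~\ref{def:amb-LS-cat}.

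By Definition~\ref{def:amb-LS-cat} such a cover is admissible for computing $\catamb(M)$, so its cardinality is an upper bound:
$$
     \catamb(M)\le\ell=\Abs{\Crit f}.
$$
No further verification is required; this is the desired inequality.

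The only conceptual difficulty has already been absorbed into Proposition~\ref{prop:amb-thick}~(ii): namely the observation that an infinite forward exhaustion $\varphi_{[0,\infty)}\Nn_i$ is not ambient isotopic to the identity (fixed points of $\varphi$ cause cracking of the natural flow-induced homotopy), whereas each individual time-$T$ map $\varphi_{T}$ is. This forced the construction of the $\Uu_i^*={\varphi_{T_i}}^{-1}\Nn_i$ with cumulative backward times $T_i=\Tt_i+\dots+\Tt_\ell$, arranged level by level so that the case analysis in Case 2b) of that proof carries through. Because all that dynamical work is done once and for all inside Proposition~\ref{prop:amb-thick}, Theorem~\ref{thm:LS-amb} costs only one extra invocation of its part~(ii).
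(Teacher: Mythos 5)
Your proof is correct and matches the paper exactly: the paper derives Theorem~\ref{thm:LS-amb} in a single sentence by rerunning the proof of Theorem~\ref{thm:LS} with Proposition~\ref{prop:amb-thick}~(ii) in place of part~(i), which is precisely what you do. Your final paragraph is just commentary on the internals of Proposition~\ref{prop:amb-thick}~(ii), not an additional step of the argument, so nothing is missing and nothing extra is needed.
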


\subsection*{Cuplength}
In order to warm up let us first consider the case of real coefficients.

\begin{theorem}\label{thm:cup-catamb-F}
There is the strict inequality
$
     \cupp_\R(W)<\catamb(W)
$
for every manifold $W$. The cuplength $\cupp$ is defined by~(\ref{eq:cupp}).
\end{theorem}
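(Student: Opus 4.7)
My plan is to prove the inequality via de Rham cohomology, which is the natural tool since the ambient homotopies in Definition~\ref{def:amb-LS-cat} are taken through differentiable maps. The idea is that for each open set $U_i$ in an ambient categorical cover, the smoothness plus ambience lets us replace a closed form $\omega_i$ by a cohomologous representative that vanishes identically on $U_i$ as a form; wedging over a cover then gives the vanishing of the cup product.

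First I would fix a minimal ambient categorical cover $\{U_1,\dots,U_\ell\}$ of $W$ with $\ell=\catamb(W)$, together with smooth homotopies $h^{(i)}\colon [0,1]\times W\to W$ satisfying $h^{(i)}_0=\id_W$ and $h^{(i)}_1(U_i)=\{m_i\}$ for some point $m_i\in W$. Given any closed forms $\omega_1,\dots,\omega_\ell$ of positive degree representing de~Rham classes in $\Ho^*(W;\R)$, set
$$
     \bar\omega_i := (h^{(i)}_1)^*\omega_i .
$$
By the de~Rham homotopy formula applied to $h^{(i)}$, the form $\bar\omega_i$ represents the same class as $\omega_i$.

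Next I would verify the pointwise vanishing $\bar\omega_i|_p=0$ for every $p\in U_i$. Since $U_i$ is open in $W$, we have $T_pU_i=T_pW$, and because $h^{(i)}_1$ is constant on $U_i$ its differential $(dh^{(i)}_1)_p\colon T_pW\to T_{m_i}W$ is the zero map. Multilinearity of $\omega_i$ then forces $\bar\omega_i|_p=0$. Consequently the wedge product
$$
     \bar\omega_1\wedge\dots\wedge\bar\omega_\ell
$$
vanishes at every point of $W$: any $p\in W$ lies in some $U_i$, where the $i$-th factor already vanishes. Since this wedge represents $[\omega_1]\CUP\dots\CUP[\omega_\ell]$, the cup product is zero for every choice of positive-degree classes, which gives $\cupp_\R(W)<\ell=\catamb(W)$.

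The main obstacle is conceptual rather than technical: one must be confident that the ambient nullhomotopy can be taken smooth (guaranteed by Definition~\ref{def:amb-LS-cat}) and that openness of the cover is genuinely used, as it is precisely openness that upgrades the cosmetic statement ``$h^{(i)}_1|_{U_i}$ is constant'' to the geometric statement ``$(dh^{(i)}_1)_p$ kills every tangent vector at $p\in U_i$.'' This de~Rham presentation is the natural warm-up for real coefficients; the general coefficient statement later on will presumably require passing to relative singular cohomology and lifting each $\alpha_i$ across the exact sequence of $(W,U_i)$, then taking the product into $\Ho^*(W,U_1\cup\dots\cup U_\ell;R)=0$, which is the same argument in disguise.
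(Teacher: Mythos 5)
Your proof is correct and follows essentially the same approach as the paper's (which quotes Robbin's argument verbatim): the key observation in both is that $f_i^*\omega_i$ vanishes identically on the open set $U_i$ because $f_i$ collapses $U_i$ to a point and $\omega_i$ has positive degree, so the wedge of the pullbacks is identically zero on the cover. The only cosmetic difference is that you replace each $\omega_i$ by the cohomologous $\bar\omega_i=(h^{(i)}_1)^*\omega_i$ and conclude directly from the fact that the wedge product is well-defined on cohomology, whereas the paper keeps the original $\omega_i$'s and expands $\omega_i=f_i^*\omega_i+d\theta_i$ to exhibit $\omega_1\wedge\dots\wedge\omega_\ell$ explicitly as an exact form.
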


The following proof is based on the
\textbf{de\,Rham model} of cohomology
$\Ho^*(M;\R)$ with real coefficients
where the $k$-cochains are sums of differential forms $\omega$ of degree $k$
and exterior differentiation $d$ being the boundary operator.
Cocycles are represented by closed differential forms $\omega$,
that is $d\omega=0$, and coboundaries by exact forms, that is those
of the form $d\theta$ for some $\theta$.

\begin{proof}
We cite~\cite{Robbin:2002} almost literally,
given its remarkable efficiency:
``Assume that $M=U_1\CUP\cdots\CUP U_\ell$ where the $U_i$
are open and that $f_i:M\to M$ ($i=1,\dots,\ell$) is a smooth map,
homotopic to the identity, such that $f(U_i)$ is a point. We must show
that $\omega_1\wedge\dots\wedge\omega_\ell$ is exact whenever
$\omega_1,\dots,\omega_\ell$
are closed forms of positive degree. Since
$\omega_i$ has positive degree, $f_i^*\omega_i|U_i=0$.
Since the sets $U_i$ cover $M$ we have
$(f_1^*\omega_1)\wedge\dots\wedge(f_\ell^*\omega_\ell)=0$.
Since $f_i$ is homotopic to the identity, there are forms $\theta_i$
with $\omega_i=f_i^*\omega_i+d\theta_i$.
Hence $\omega_1\wedge\dots\wedge\omega_\ell$ is a sum of products
$\beta_1\wedge\dots\wedge\beta_\ell$ where each $\beta_i$
is either $f_i^*\omega_i$ or $d\theta_i$ and at least one $\beta_i$
has the latter form. Each such product is exact so
$\omega_1\wedge\dots\wedge\omega_\ell$ is exact as
claimed.''\footnote{
  As $f_i\sim \id$, the difference $\mbf{f_i^*}-\mbf{\id^*}$ is zero on
  cohomology by the \texttt{(Homotopy)} axiom: So evaluating the
  pull-back difference $f_i^*-\id^*$ on any closed form, say
  $\omega_i$, returns an exact form.
  }
\end{proof}

Combining Theorems~\ref{thm:LS-amb} and~\ref{thm:cup-catamb-F}
shows that the $\R$-cuplength is a strict lower bound for the number
of critical points of a $C^2$ function on a closed manifold.
Let us now turn to the general case of coefficients
in any commutative ring $R$. The following result completes the proof
of the inequalities in~(\ref{eq:inequalities}).

\begin{theorem}\label{thm:cup-cat}
Given a topological space $X$, there is the strict inequality
$
     \cupp_R(X)<\cat(X)
$
whenever $R$ is a commutative ring.
\end{theorem}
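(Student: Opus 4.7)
The plan is to replace the de Rham argument of Theorem~\ref{thm:cup-catamb-F} by a purely topological one using the long exact sequences of pairs together with the relative cup product, thereby lifting cohomology classes off open nullhomotopic pieces and pushing the cup product into a group that is forced to be zero.

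First I would assume $\cat(X)=\ell<\infty$ (otherwise the inequality is trivial) and fix a categorical cover $U_1,\dots,U_\ell$ of $X$ by open sets for which each inclusion $j_i\colon U_i\hookrightarrow X$ is nullhomotopic. In positive degree the induced map $j_i^*\colon \Ho^{>0}(X;R)\to\Ho^{>0}(U_i;R)$ factors through the cohomology of a point, hence vanishes. Therefore, given any class $\alpha_i\in\Ho^{p_i}(X;R)$ with $p_i>0$, the long exact sequence of the pair $(X,U_i)$ produces a relative lift
\begin{equation*}
     \tilde\alpha_i\in\Ho^{p_i}(X,U_i;R),\qquad
     q_i^*\tilde\alpha_i=\alpha_i,
\end{equation*}
where $q_i^*\colon\Ho^*(X,U_i;R)\to\Ho^*(X;R)$ is the map of the pair.

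Next I would invoke the relative cup product
\begin{equation*}
     \Ho^p(X,A;R)\otimes\Ho^q(X,B;R)\TO\Ho^{p+q}(X,A\cup B;R),
\end{equation*}
valid for any pair of open (or, more generally, excisive) subsets $A,B\subset X$. Iterating over the $\ell$ factors and observing that $U_1\CUP\cdots\CUP U_\ell=X$, the product
\begin{equation*}
     \tilde\alpha_1\CUP\cdots\CUP\tilde\alpha_\ell
     \in\Ho^{p_1+\cdots+p_\ell}(X,U_1\CUP\cdots\CUP U_\ell;R)
     =\Ho^{p_1+\cdots+p_\ell}(X,X;R)=0
\end{equation*}
vanishes automatically. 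Naturality of the cup product under the maps $q_i^*$ then yields $\alpha_1\CUP\cdots\CUP\alpha_\ell=q^*\bigl(\tilde\alpha_1\CUP\cdots\CUP\tilde\alpha_\ell\bigr)=0$, where $q^*\colon\Ho^*(X,X;R)\to\Ho^*(X;R)$ is the canonical map. Since the $\alpha_i$ of positive degree were arbitrary, $\cupp_R(X)\le\ell-1<\ell=\cat(X)$.

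The main technical obstacle I anticipate is justifying the relative cup product with the stated openness of the $U_i$: one needs either a direct construction on singular cochains (where $\alpha\CUP\beta$ vanishing on $A$ or $B$ descends to a class in $\Ho^*(X,A\cup B;R)$ only after passing to $\{A,B\}$-small simplices, which is permitted by the small simplex theorem for open covers) or, equivalently, excision applied to the pair $(X,A\cup B)$. Once this naturality and multiplicativity is in place, the argument is completely formal, and the vanishing of $\Ho^*(X,X;R)$ does all the work. Note also that open covers are essential here: if the $U_i$ were only closed, the small simplex/excision step would need an extra argument such as the one behind Remark~\ref{rem:op-cl-cat}.
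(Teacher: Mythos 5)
Your proposal is correct and is essentially the paper's own argument: both lift each positive-degree class $\alpha_i$ to a relative class in $\Ho^*(X,U_i;R)$ via the long exact sequence of the pair (using that $U_i$ is nullhomotopic), then use the relative cup product for excisive (open) couples to land the product of the lifts in $\Ho^*(X,U_1\cup\cdots\cup U_\ell;R)=\Ho^*(X,X;R)=0$, and conclude by naturality. The excisiveness concern you flag is exactly what the paper handles by citing Dold (III 8.1 and VII 8.3'), so the proposal and the paper's proof coincide in substance.
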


\begin{proof}
Denote cohomology with coefficients in $R$ by $\Ho^*$.
Suppose $U_1,\dots,U_\ell$ is a categorical cover of $X$
and $\mbf{\alpha_1},\dots,\mbf{\alpha_\ell}\in\Ho^{\ge 1}(X)$ are
$\ell=\cat(X)$ cohomology classes of positive degree.
For each $k=1,\dots,\ell$ consider the two inclusion maps
$i_k:U_k\hookrightarrow X$ and $j_k:(X,\emptyset)\to (X,U_k)$
and the associated exact cohomology sequence of the pair $(X,U_k)$, namely
\begin{equation*}
     \dots 
     \overset{\:\delta^*}{\longleftarrow}
     \Ho^*(U_k)\overset{\mbf{\: i_k^\#}}{\longleftarrow}
     \Ho^*(X)\overset{\mbf{\: j_k^\#}}{\longleftarrow}
     \Ho^*(X,U_k)\overset{\:\delta^*}{\longleftarrow}
     \dots\quad .
\end{equation*}
Observe that $\mbf{\alpha_k}$ lies in the kernel of the (degree preserving)
homomorphism $\mbf{i_k^\#}$, because $\mbf{\alpha_k}$
is of positive degree $d_k>0$ while the target cohomology lives in degree zero
since $U_k$ is nullhomotopic. Thus, by exactness,
the class $\mbf{\alpha_k}$ is of the form $\mbf{j_k^\#}\mbf{\beta_k}$
for some relative class $\mbf{\beta_k}\in\Ho^{d_k}(X,U_k)$.
For excisive couples in $X$ (here openess and the
cover property of the $U_i$ enters, cf.~\cite[III~8.1]{dold:1995a})
the cup product descends to relative cohomology,
cf.~\cite[VII~(8.3')]{dold:1995a}, and we get that
\begin{equation*}
\begin{split}
     \mbf{\alpha_1}\CUP\dots\CUP\mbf{\alpha_\ell}
   &=\mbf{j_1^\#}\mbf{\beta_1}\CUP\dots\CUP\mbf{j_\ell^\#}\mbf{\beta_\ell}
     \in\Ho^{d_1+\dots+d_\ell}(X,U_1\CUP\dots\CUP U_\ell)
     =0
\end{split}
\end{equation*}
Here the last identity uses that $U_1\CUP\dots\CUP U_\ell=X$
and $\Ho^*(X,X)=0$.
\end{proof}

\subsection{Birkhoff minimax principle}
The second of the two pillars of Morse theory is the cell attachment
theorem~\cite[Thm.\,3.1]{milnor:1963a}, the first one is

\begin{theorem}[Regular interval theorem{\index{theorem!regular interval --}}]
\label{thm:no-crit_diff}
Assume $M$ is a manifold and $f:M\to\R$ is of class $C^2$ and
the pre-image $f^{-1}[a,b]$ is compact
and contains no critical points of~$f$. Then
$M^b:=\{f\le b\}$ and $M^a$ are diffeomorphic
Furthermore, the sublevel set $M^a$
is a strong deformation retract of $M^b$.
\end{theorem}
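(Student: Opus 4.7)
The plan is the classical rescaled gradient-flow argument. Since $f^{-1}[a,b]$ is compact and contains no critical points, $\|\nabla f\|$ is bounded below by a positive constant on this set, so $1/\|\nabla f\|^2$ is a smooth function on an open neighborhood $U$ of $f^{-1}[a,b]$ contained in $\{q\in M:\nabla f(q)\ne 0\}$. Choose a smooth cutoff $\chi:M\to[0,1]$ with $\chi\equiv 1$ on $f^{-1}[a,b]$ and $\supp\chi$ a compact subset of $U$, and consider the compactly supported smooth vector field
\[
     X:=-\frac{\chi}{\|\nabla f\|^2}\,\nabla f.
\]
Because $X$ is compactly supported it generates a complete smooth flow $\psi:\R\times M\to M$. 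The key identity $df(X)=-\chi$ yields two facts I use repeatedly: (a) along any flow line $f$ is non-increasing, with unit rate of decrease while inside $f^{-1}[a,b]$; (b) globally $|\tfrac{d}{dt}f(\psi_tp)|\le 1$.

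Next I would show that the time-$(b-a)$ map $\Phi:=\psi_{b-a}$ restricts to a diffeomorphism from $M^b$ onto $M^a$, with smooth inverse $\psi_{-(b-a)}|_{M^a}$. For $p\in M^b$: if $f(p)\in(a,b]$ the flow line falls at unit rate through $f^{-1}[a,b]$ and crosses level $a$ at time $f(p)-a\le b-a$, after which $f\circ\psi_t$ stays $\le a$ by~(a); if already $f(p)\le a$, then $f(\Phi(p))\le a$ again by~(a). Hence $\Phi(M^b)\subset M^a$. Symmetrically, for $q\in M^a$, bound~(b) gives $f(\psi_{-(b-a)}q)\le f(q)+(b-a)\le b$, so $\psi_{-(b-a)}(M^a)\subset M^b$. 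Since $\psi_{b-a}$ and $\psi_{-(b-a)}$ are already mutually inverse diffeomorphisms of $M$, combining the two inclusions upgrades the restrictions to mutually inverse diffeomorphisms between $M^b$ and $M^a$.

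For the strong deformation retraction set
\[
     H(s,p):=\psi_{s\,\max(0,\,f(p)-a)}(p),\qquad (s,p)\in[0,1]\times M^b.
\]
Then $H(0,\cdot)=\id_{M^b}$; for $p\in M^a$ the shift time is zero, so $H(s,p)=p$ throughout; and for $p\in M^b$ with $f(p)\ge a$ the analysis of~(a) shows $H(1,p)\in f^{-1}(a)\subset M^a$. Continuity across $f^{-1}(a)$ is automatic since the two piecewise prescriptions agree there, giving the required strong deformation retraction of $M^b$ onto $M^a$.

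The one delicate point is the choice of cutoff. The requirement $\chi\le 1$, which produces the uniform bound $|df(X)|\le 1$, is essential: it prevents the backward flow from kicking a point of $M^a$ above level $b$, which would destroy the $M^a\to M^b$ direction. With a larger rescaling one would still obtain a strong deformation retraction, but not the claimed diffeomorphism — it is precisely this uniform speed bound that upgrades the retract to a smooth bijection.
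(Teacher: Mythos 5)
The paper states this regular interval theorem without proof, treating it as one of the classical pillars of Morse theory (Milnor). Your argument is correct and is the standard Milnor proof: the cut-off rescaled gradient field $X=-\tfrac{\chi}{\|\nabla f\|^2}\nabla f$ with $\chi\le 1$ gives $df(X)=-\chi\in[-1,0]$, so the forward time-$(b-a)$ map sends $M^b$ into $M^a$ while the backward flow sends $M^a$ into $M^b$, and $H(s,p)=\psi_{s\max(0,f(p)-a)}(p)$ is the standard strong deformation retraction. One small technical nit: with $f$ only $C^2$ the vector field $X$ is $C^1$ rather than $C^\infty$, so the flow and hence the diffeomorphism and homotopy are of class $C^1$ -- which is all the theorem needs, but "smooth" should be read accordingly.
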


\begin{corollary}[Existence of a critical point]
\label{cor:ex-crit-by-reg-interv}\index{critical point!existence}
If two sublevel sets $M^a\subset M^b$ of a $C^2$ function
$f:M\to\R$ are of different homotopy type
and $f^{-1}[a,b]\subset M$ is compact, then there exists an
intermediate critical level, hence a critical point.
\end{corollary}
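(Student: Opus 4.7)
The plan is to argue by contrapositive using the Regular Interval Theorem~\ref{thm:no-crit_diff} directly. So I would suppose, aiming at a contradiction, that $f^{-1}[a,b]$ contains no critical points of $f$ and derive that $M^a$ and $M^b$ must be of the same homotopy type, contrary to hypothesis.

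Concretely, first I would note that the hypotheses of Theorem~\ref{thm:no-crit_diff} are exactly satisfied under the contrapositive assumption: $f:M\to\R$ is of class $C^2$, the pre-image $f^{-1}[a,b]$ is compact by assumption, and (by contradiction hypothesis) contains no critical points. The conclusion of Theorem~\ref{thm:no-crit_diff} then gives two pieces of information, either of which suffices: a diffeomorphism $M^b\cong M^a$, and the stronger fact that $M^a$ is a strong deformation retract of $M^b$.

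Second, I would invoke the standard fact that a strong deformation retract induces a homotopy equivalence between the ambient space and the retract; the inclusion $M^a \hookrightarrow M^b$ is in particular a homotopy equivalence. Hence $M^a$ and $M^b$ have the same homotopy type, contradicting the hypothesis. This forces the existence of at least one critical point $x\in f^{-1}[a,b]$, whose critical value $f(x)$ provides the claimed intermediate critical level.

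There is essentially no obstacle here: the corollary is a one-line contrapositive of Theorem~\ref{thm:no-crit_diff}, and the only background used is that a strong deformation retract yields a homotopy equivalence, which is standard. The only minor point worth stating explicitly in the write-up is that the critical point produced lies strictly in the slab $f^{-1}[a,b]$ (and, because $a$ and $b$ themselves may or may not be regular, one need not promise that the critical value lies in the open interval $(a,b)$ — the statement only asserts an intermediate critical level).
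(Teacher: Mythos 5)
Your proposal is correct and is precisely the paper's intended argument: the corollary is the contrapositive of Theorem~\ref{thm:no-crit_diff}, which is why the paper gives no separate proof. The only ingredient beyond the theorem itself is the standard fact that a (strong) deformation retract induces a homotopy equivalence, which you state correctly.
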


The idea to prove the regular interval
Theorem~\ref{thm:no-crit_diff}, namely, to exploit the
absence of critical points to push things down
also immediately implies the following version of the famous
\textbf{\Index{Birkhoff minimax principle}}~\cite{Birkhoff:1935b}.

\begin{theorem}[Minimax principle]\index{minimax principle!Birkhoff}
\label{NEW-thm:minimax-principle}\index{minimax principle}
Suppose $a$ and $b$ are regular values of a $C^2$  function $f$ on a
manifold $M$ with compact pre-image $f^{-1}[a,b]\subset M$.
For $s\in[a,b]$ consider the map of pairs
$j^s:(M^b,M^a)\to (M^b,M^s)$ induced by inclusion.
Then every non-trivial relative singular homology\footnote{
  One can replace integral singular homology by any homotopy invariant functor,
  for instance, by the homotopy functor $\pi_*$ or by the equivariant
  homology functor $\Ho_*^G$.
  }
class $\mbf{c}\in\Ho_k(M^b,M^a)\setminus\{0\}$ gives rise to a
critical value of $f$. More precisely, the three infima\footnote{
  In (\ref{NEW-eq:minimax-value}) the compact set
  $\im\sigma\CAP f^{-1}[a,b]$ is the part in $f^{-1}[a,b]$
  of the union of the (compact) images of all singular simplices that
  appear in the cycle $\sigma$. Define $\max\emptyset=-\infty$.
  }
\begin{equation}\label{NEW-eq:minimax-value}
\begin{split}
     \kappa=\kappa(\mbf{c},f)
  :&=\inf\{s\in[a,b]\mid\mbf{j^s_*}(\bc)=0\}\\
   &=\inf\{s\in[a,b]\mid\text{\rm $\bc$ comes from $\Ho_*(M^s,M^a)$}\}\\
   &=\inf_{\sigma\in\bc}\max f|_{\im \sigma\CAP f^{-1}[a,b]}\\
   &\in(a,b)
\end{split}
\end{equation}
exist and coincide and there is a critical point $x$ of $f$, non-degenerate or
not, with $f(x)=\kappa$. If $f$ is Morse on $f^{-1}[a,b]$, then more is true:
There is such $x$ whose Morse index is equal to the degree $k$ of the
relative homology class $\mbf{c}$.
\end{theorem}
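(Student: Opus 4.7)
The plan is to prove equality of the three infima via the long exact sequence of the triple $(M^b,M^s,M^a)$, locate $\kappa\in(a,b)$ using the regular interval Theorem~\ref{thm:no-crit_diff} near the endpoints, establish criticality of $\kappa$ by contradiction against the same theorem, and finally pin down the Morse index in the Morse case using cell attachment.

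\emph{Equality and location.} For $s\in[a,b]$, exactness of the triple sequence
$$
  \Ho_*(M^s,M^a)\longrightarrow\Ho_*(M^b,M^a)\xrightarrow{\:j^s_*\:}\Ho_*(M^b,M^s)
$$
shows $j^s_*(\bc)=0$ iff $\bc$ comes from $\Ho_*(M^s,M^a)$, so the first two infima coincide. The second and third agree by the chain-level translation: $\bc$ comes from $\Ho_*(M^s,M^a)$ precisely when there is a representative singular cycle $\sigma\in\bc$ with $\im\sigma\subset M^s$, which for chains supported in $M^b$ is equivalent to $\max f|_{\im\sigma\cap f^{-1}[a,b]}\le s$. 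The set $S:=\{s\in[a,b]:j^s_*\bc=0\}$ is upward closed, contains $b$ (since $\Ho_*(M^b,M^b)=0$), and excludes $a$ (since $j^a_*=\id$ and $\bc\ne 0$). Applying Theorem~\ref{thm:no-crit_diff} in a critical-point-free neighborhood of the regular value $b$ gives a deformation retraction of $M^b$ onto $M^{b-\eps}$, which forces $\bc$ to already come from $\Ho_*(M^{b-\eps},M^a)$ and hence $\kappa\le b-\eps<b$; an analogous retraction $M^{a+\eta}$ onto $M^a$ near the regular value $a$ would push any representative supported in $M^{a+\eta}$ into $M^a$, contradicting $\bc\ne 0$ and yielding $\kappa\ge a+\eta>a$.

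\emph{Criticality of $\kappa$.} This is the main step and principal obstacle: promoting the regular interval theorem to a chain-level contradiction requires viewing the deformation retraction as a pair-homotopy rel $M^a$. Suppose toward contradiction that $\kappa$ is a regular value. Then $f^{-1}[\kappa-\delta,\kappa+\delta]$ is compact and critical-point-free for some $\delta>0$, so Theorem~\ref{thm:no-crit_diff} yields a strong deformation retraction $r_t:M^{\kappa+\delta}\to M^{\kappa+\delta}$ with $r_1(M^{\kappa+\delta})\subset M^{\kappa-\delta}$ and each $r_t$ fixing $M^{\kappa-\delta}$ (in particular $M^a$) pointwise. By upward closedness of $S$ and $\kappa=\inf S$, one has $\kappa+\delta/2\in S$, whence a representative $\sigma\in\bc$ with $\im\sigma\cap f^{-1}[a,b]\subset M^{\kappa+\delta/2}$ and therefore $\im\sigma\subset M^{\kappa+\delta}$. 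Then $r_1\circ\sigma$ is a chain supported in $M^{\kappa-\delta}$, homologous to $\sigma$ mod $M^a$ via the pair-homotopy $r_t$, so $\bc$ comes from $\Ho_*(M^{\kappa-\delta},M^a)$ --- contradicting $\kappa=\inf S$.

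\emph{Morse index.} Assume $f|_{f^{-1}[a,b]}$ is Morse and suppose toward contradiction that every critical point at level $\kappa$ has index different from $k$. By the cell attachment theorem~\cite[Thm.\,3.1]{milnor:1963a}, for small $\eps>0$ the space $M^{\kappa+\eps}$ is obtained up to homotopy from $M^{\kappa-\eps}$ by attaching finitely many cells of dimensions all different from $k$, so the long exact sequence of the pair $(M^{\kappa+\eps},M^{\kappa-\eps})$ makes the inclusion induce an isomorphism on $\Ho_k(-,M^a)$. Therefore any degree-$k$ class coming from $\Ho_k(M^{\kappa+\eps},M^a)$ already comes from $\Ho_k(M^{\kappa-\eps},M^a)$, once more contradicting $\kappa=\inf S$; hence some critical point at level $\kappa$ has Morse index exactly $k$.
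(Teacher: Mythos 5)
Your proof is correct, and it essentially follows the same strategy the paper sketches (which it labels only an ``idea of proof''): exploit the absence of critical points near a putative regular value $\kappa$ to deform representative cycles below level $\kappa$ and derive a contradiction with $\kappa=\inf S$. The places where you differ are genuine and worth noting. First, for the criticality step the paper works with a \emph{sequence} of cycles $\sigma_i$ approximating $\kappa$ and pushes each down by the negative gradient flow; you instead use the upward-closedness of $S$ (the paper's no-gaps Lemma~\ref{NEW-le:no-gap-sup}) to pick a \emph{single} cycle with $\max f\le\kappa+\delta/2$ and then invoke the regular interval Theorem~\ref{thm:no-crit_diff} as a black box for the retraction. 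This is cleaner, and you are also more careful than the paper in spelling out \emph{why} the retraction respects the relative class: it is a pair-homotopy because the strong deformation retraction fixes $M^{\kappa-\delta}\supset M^a$ pointwise, a point the paper compresses into ``by the (Homotopy) axiom''. Second, for the Morse index claim the paper merely cites the Morse inequalities; you give a direct, self-contained argument via the cell attachment theorem and the exact sequence of the triple $(M^{\kappa+\eps},M^{\kappa-\eps},M^a)$. One small correction there: attaching cells of dimension $\ne k$ kills $\Ho_k(M^{\kappa+\eps},M^{\kappa-\eps})$ but not necessarily $\Ho_{k+1}(M^{\kappa+\eps},M^{\kappa-\eps})$, so the inclusion induces a \emph{surjection}, not an isomorphism, on $\Ho_k(\,\cdot\,,M^a)$; luckily surjectivity is precisely all your argument uses (lift $\bc$ backwards one level), so the conclusion stands unchanged.
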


Note that Theorem~\ref{NEW-thm:minimax-principle} lacks any quantitative
information, such as how many critical points to expect or, more modestly,
if different homology classes would lead to different critical levels.
For Morse functions these questions are answered
by the Morse inequalities~\cite[\S 5]{milnor:1963a}
and for general functions by the Lusternik-Schnirelmann
principle, Theorem~\ref{NEW-thm:LS-princple} in
Section~\ref{sec:LS-minmax} below, whose proof uses the thickenings
constructed in Proposition~\ref{prop:amb-thick}~(i) via Conley pairs.

Some remarks concerning the definition of $\kappa(\bc,f)$
and the exact sequence
\begin{equation*}
     \cdots
     \longrightarrow
     \Ho_*(M^s,M^a)\overset{\mbf{i^s_*}}{\longrightarrow}
     \underbrace{\Ho_*(M^b,M^a)}_{\ni\bc\not= 0}
          \overset{\mbf{j^s_*}}{\longrightarrow}
     \Ho_*(M^b,M^s)\longrightarrow
     \cdots
\end{equation*}
associated to the triple $(M^b,M^s,M^a)$ are in order. By exactness
$\mbf{j^s_*\bc=0}$ is equivalent to $\bc=\mbf{i^s_*}(\mbf{c^s})$
for some (non-trivial) class $\mbf{c^s}\in\Ho_*(M^s,M^b)$. We shall informally
abbreviate the latter situation by saying that
\textbf{$\bc$ comes from $\mbf{\Ho_*(M^s,M^a)}$}.
Observe that not only is $\Ho_*(M^b,M^b)=\{0\}$ trivial, but even
$\Ho_*(M^b,M^s)=\{0\}$ for all $s$ near $b$\,: Compactness of
$f^{-1}[a,b]\subset M$ and continuity of $f$ imply that the set of critical 
values is a compact subset of $[a,b]$, hence of $(a,b)$, as $a,b$ are
regular values. Hence any $s$ near $b$ is a regular value and the sublevel
set $M^s$ is a strong deformation retract of $M^b$ by the
regular interval Theorem~\ref{thm:no-crit_diff}.
Thus the homomorphism $\mbf{j^s_*}$ is zero near $b$
and it is the identity near $a$ by a similar argument.
Thus the infimum exists and lies in $(a,b)$.
\\
A key property is that once $\mbf{j^s_*}(\bc)$ is zero
for some value $s$ it remains zero for all larger values,
that is there are no gaps in the set $I_{\bc}$ of parameters $s$
such that $\mbf{j^s_*}(\bc)=0$. In other words,
the zero set is an interval containing the end parameter
$b$, but not the initial one $a$.

\begin{lemma}[No gaps]\label{NEW-le:no-gap-sup}
Under the assumptions of Theorem~\ref{NEW-thm:minimax-principle}
the set of all $s\in[a,b]$ for which $\mbf{j^s_*}(\bc)=0$ is zero or,
equivalently, for which $\bc$ comes from $\Ho_*(M^s,M^a)$,
is an interval $I_{\bc}$ of the form $(s_0,b]$ or $[s_0,b]$ for some $s_0>a$.
\end{lemma}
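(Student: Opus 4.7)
The plan is to establish the two-part description of $I_{\bc}$ by proving separately (a) that $I_{\bc}$ is upward-closed in $[a,b]$ and (b) that its infimum $s_0$ is strictly larger than $a$. The remarks just before the lemma already show that $b\in I_{\bc}$ (by the regular interval theorem near $b$) and that $a\notin I_{\bc}$ (since $\mbf{j^a_*}$ is the identity on $\Ho_*(M^b,M^a)$ and $\bc\ne 0$), so $I_{\bc}$ is non-empty with an infimum $s_0\in[a,b)$, and the alternative $(s_0,b]$ versus $[s_0,b]$ just records whether $s_0$ itself lies in $I_{\bc}$.

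First I would prove upward closure by a purely functorial argument. For $a\le s\le s'\le b$ the inclusion of pairs $k:(M^b,M^s)\hookrightarrow(M^b,M^{s'})$ fits into the commutative diagram
\[
(M^b,M^a)\xrightarrow{\,j^s\,}(M^b,M^s)\xrightarrow{\,k\,}(M^b,M^{s'}),
\]
whose composition is $j^{s'}$. Applying singular homology yields $\mbf{j^{s'}_*}=\mbf{k_*}\circ\mbf{j^s_*}$, so $\mbf{j^s_*}(\bc)=0$ forces $\mbf{j^{s'}_*}(\bc)=0$. Hence $I_{\bc}$ is upward-closed, and combined with the preceding paragraph this already gives $I_{\bc}=(s_0,b]$ or $[s_0,b]$ for some $s_0\ge a$.

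To strengthen $s_0\ge a$ to $s_0>a$ I would invoke the regular interval Theorem~\ref{thm:no-crit_diff} near the initial level. Since $a$ is a regular value of $f$ and the critical set in the compact slab $f^{-1}[a,b]$ is closed (so the set of critical values in $[a,b]$ is compact and avoids $a$), there is $\delta>0$ such that $f^{-1}[a,a+\delta]$ contains no critical points of $f$. For every $s\in[a,a+\delta]$ the regular interval theorem applied to $f^{-1}[a,s]$ shows that $M^a$ is a strong deformation retract of $M^s$; consequently the inclusion of pairs $(M^b,M^a)\hookrightarrow(M^b,M^s)$ is a homotopy equivalence of pairs and $\mbf{j^s_*}$ is an isomorphism. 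Since $\bc\ne 0$, we obtain $\mbf{j^s_*}(\bc)\ne 0$ throughout $[a,a+\delta]$, whence $s_0\ge a+\delta>a$.

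I do not expect any genuine obstacle in this argument: both ingredients — the functoriality of the long exact sequence of a triple and the regular interval theorem — are already at our disposal. The only point that deserves a line of justification is the equivalence between $\mbf{j^s_*}(\bc)=0$ and the phrase ``$\bc$ comes from $\Ho_*(M^s,M^a)$''; but this is immediate from exactness of the triple $(M^b,M^s,M^a)$ displayed just before the lemma.
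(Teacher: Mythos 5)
Your argument is correct and follows the route the paper intends: the upward-closure step is precisely the hinted functoriality $j^{s'}=j^{s s'}\circ j^{s}$ with the fact that homomorphisms send $0$ to $0$, and the claim $s_0>a$ is the paper's own observation (stated in the paragraph just before the lemma) that $\mbf{j^s_*}$ is an isomorphism for $s$ near $a$ by the regular interval Theorem~\ref{thm:no-crit_diff}. One tiny remark: rather than asserting that $(M^b,M^a)\hookrightarrow(M^b,M^s)$ is a homotopy equivalence of \emph{pairs}, it is cleaner to note that $\Ho_*(M^s,M^a)=0$ (since $M^a\hookrightarrow M^s$ is a homotopy equivalence) and conclude that $\mbf{j^s_*}$ is an isomorphism directly from the exact sequence of the triple $(M^b,M^s,M^a)$; this avoids having to extend the deformation retraction to the ambient pair.
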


The proof is left as an exercise combining functoriality for the
inclusions $j^s=j^{\tau s}j^\tau:
(M^b,M^a)\to(M^b,M^\tau)\to(M^b,M^s)$ with the basic fact
that a homomorphism maps zero to zero.

\begin{proof}[Idea of proof of Theorem~\ref{NEW-thm:minimax-principle}.]
We already saw that the first two infima coincide and lie in $(a,b)$.
We leave the third identity in~(\ref{NEW-eq:minimax-value})
as an exercise.
It remains to show that $\kappa=\kappa(\bc,f)$ is realized as
the value of a critical point: Following~\cite{bott:1982b}
suppose $\sigma_i$ is a sequence of cycles
approximating $\kappa$ in the sense that
$\max f|_{\im \sigma_i\CAP f^{-1}[a,b]}\to\kappa$,
as $i\to\infty$. Assume by contradiction
that $\kappa$ is not a critical value.
Pick a Riemannian metric $g$ on $M$
and use the local flow $\varphi$ generated by $-\nabla^g f$,
or the corresponding level preserving local flow $\tilde\varphi_\eps$,
to push down by a fixed level difference $\eps>0$
each singular simplex appearing in $\sigma_i$.
Let $\tilde\sigma_i$ denote the corresponding sum
of the pushed down simplices.
By the \texttt{(Homotopy)} axiom of singular homology
$[\tilde\sigma_i]=[\sigma_i]=\mbf{c}$.
But $\max f|_{\im \tilde\sigma_i\CAP f^{-1}[a,b]}\to\kappa-\eps$,
as $i\to\infty$, which contradicts minimality of $\kappa$.
\\
The assertion in the Morse case holds by the 
Morse inequalities~\cite[\S 5]{milnor:1963a}.
\end{proof}

\subsection*{Subordination -- refined minimax principle}
\label{sec:LS-minmax}
\begin{definition}
Suppose $X$ is a topological space of finite cohomology type,
for instance a compact manifold. Let $R$ be a commutative ring.
Abbreviating $\Ho=\Ho(X;R)$ the cap product is a map
$\CAP:\Ho^p\times\Ho_m\to\Ho_{m-p}$; see
e.g.~\cite[VII~(12.3)]{dold:1995a}.
A non-trivial homology class $\mbf{b_1}\in\Ho_*\setminus\{0\}$
is called \textbf{subordinated} to a homology class $\mbf{b_2}$,
in symbols $\mbf{b_1}<\mbf{b_2}$,\footnote{
  Sometimes it is useful to call $\mbf{b_1}<\mbf{b_2}$ a
  \textbf{pair of subordinated homology classes}.
  }
if there is an identity of the form
$$
     \mbf{b_1}=\mbf{\omega}\CAP\mbf{b_2}
$$
for some cohomology class $\mbf{\omega}\in\Ho^{p>0}$ of
\emph{positive} degree. Note that $\mbf{b_2}$ is non-trivial and of
higher degree than $\mbf{b_1}$. Subordination is transitive. The
\textbf{$\mbf{R}$-subordination number}
$\sub_R(X)$ of $X$ is the largest integer $k\in\N_0$
such that there is a chain of subordinated classes of the form
$$
     \mbf{b_1}<\mbf{b_2}<\dots<\mbf{b_k}<\mbf{b_{k+1}}.
$$
Note that in such a chain $k$, and not $k+1$, classes are subordinated
to $b_{k+1}$. Set $\sub_R(X)=0$ in case there is no pair
$\mbf{b_1}<\mbf{b_2}$ of subordinated classes.
\end{definition}

Observe that $\cupp_R(X)=\sub_R(X)$ by the compatibility formula
$\left(\balpha\cup\bbeta\right)\cap\bc=\balpha\cap\left(\bbeta\cap\bc\right)$;
see e.g.~\cite[VII~(12.7)]{dold:1995a}.
For connected manifolds there is the obvious finiteness estimate
$\sub_R(M)\le\dim M$; cf.~(\ref{eq:cupp-dim}). 

Subordinated classes detect different critical levels, thus different critical points.
We shall formulate the result in terms of relative homology.

\begin{theorem}[The Lusternik-Schnirelmann refined minimax principle]
\label{NEW-thm:LS-princple}
Suppose $a$ and $b$ are regular values of a $C^2$ function f on a
manifold $M$ and the pre-image $f^{-1}[a,b]$ is compact. Given a pair of
\textbf{subordinated relative homology classes}\footnote{
  i.e. $\ba,\bb\not= 0$ and $\ba=\mbf{\omega}\CAP\bb$
  for some class $\mbf{\omega}\in\Ho^{p>0}(M^b)$
  where we use the cap product
  \begin{equation*}
     \Ho^p(M^b)\times\Ho_m(M^b,M^a)\overset{\cap\;}{\longrightarrow}
     \Ho_{m-p}(M^b,M^a)
  \end{equation*}
  associated to the excisive triad $(M^b;M^a,\emptyset)$;
  see e.g.~\cite[VII~(12.3)]{dold:1995a}.
  }
$\mbf{a}<\mbf{b}\in\Ho_*(M^b,M^a):=\Ho_*(M^b,M^a;R)$
for some commutative ring $R$, then the minimax critical values
from~(\ref{NEW-eq:minimax-value})
satisfy the inequality $\kappa(\mbf{a},f)\le\kappa(\mbf{b},f)$.
If all critical points of $f$ are isolated, then the inequality
$$
     \kappa(\mbf{a},f)<\kappa(\mbf{b},f)
$$
is strict and so the corresponding critical points are different.
\end{theorem}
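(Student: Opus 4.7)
The overall strategy is to derive the non-strict inequality from naturality of the cap product together with the No Gaps Lemma~\ref{NEW-le:no-gap-sup}, and then, under isolatedness, to upgrade to strict inequality by constructing at the common critical level $\kappa$ a deformation retraction of $M^{\kappa+\eps}$ onto $V:=M^{\kappa-\eps}\cup U$, where $U$ is a nullhomotopic open neighborhood of $\Crit f\cap f^{-1}(\kappa)$; excision together with $\Ho^{>0}=0$ on contractibles then eliminates the $U$-piece after capping with $\mbf{\omega}$. For the non-strict inequality, pick any regular value $s>\kappa(\bb,f)$. By No Gaps, $\mbf{j^s_*}(\bb)=0$, so exactness of the triple $(M^b,M^s,M^a)$ gives $\bb=\mbf{i^s_*}(\mbf{b^s})$ for some $\mbf{b^s}\in\Ho_*(M^s,M^a)$. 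Naturality of the cap product yields
\[
  \ba=\mbf{\omega}\CAP\bb=\mbf{\omega}\CAP\mbf{i^s_*}(\mbf{b^s})=\mbf{i^s_*}\bigl((i^s)^*\mbf{\omega}\CAP\mbf{b^s}\bigr),
\]
so $\ba$ also comes from $\Ho_*(M^s,M^a)$, whence $\kappa(\ba,f)\le s$; taking the infimum over such $s$ gives $\kappa(\ba,f)\le\kappa(\bb,f)$.

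For the strict inequality assume by contradiction that $\kappa:=\kappa(\ba,f)=\kappa(\bb,f)$; by Theorem~\ref{NEW-thm:minimax-principle}, $\kappa$ is a critical value in $(a,b)$, and by isolatedness together with compactness of $f^{-1}[a,b]$ there are only finitely many critical points $x_1,\dots,x_k$ at level $\kappa$. Using Step~(0) in the proof of Proposition~\ref{prop:amb-thick}, I choose pairwise disjoint open coordinate balls $V_i\ni x_i$ and set $U:=\bigsqcup_i V_i$, a disjoint union of contractibles. For $\eps>0$ so small that $\kappa$ is the only critical value in $[\kappa-\eps,\kappa+\eps]$, the set $f^{-1}[\kappa-\eps,\kappa+\eps]\setminus U$ is compact and free of fixed points, so the downward gradient flow (e.g.\ time-reparametrized by $-\nabla f/\|\nabla f\|^2$ off $\Crit f$) gives a strong deformation retraction of $M^{\kappa+\eps}$ onto $V=M^{\kappa-\eps}\cup U$. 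Since $\kappa+\eps>\kappa(\bb,f)$, No Gaps produces $\bb=i_*\mbf{b'}$ with $\mbf{b'}\in\Ho_m(V,M^a)$ and $i\colon(V,M^a)\INTO(M^b,M^a)$; by naturality $\ba=i_*\bigl(i^*\mbf{\omega}\CAP\mbf{b'}\bigr)$, so it suffices to show $i^*\mbf{\omega}\CAP\mbf{b'}$ lies in the image of $\Ho_{m-p}(M^{\kappa-\eps},M^a)\to\Ho_{m-p}(V,M^a)$, i.e.\ maps to zero under the map $\pi$ from the triple $(V,M^{\kappa-\eps},M^a)$. By naturality, $\pi(i^*\mbf{\omega}\CAP\mbf{b'})=i^*\mbf{\omega}\CAP\pi(\mbf{b'})$. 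Openness of $U$ in $V$ forces $\overline{M^{\kappa-\eps}\setminus U}\subset\mathrm{int}_V(M^{\kappa-\eps})$, so excision yields an isomorphism $j_*\colon\Ho_*(U,U\cap M^{\kappa-\eps})\to\Ho_*(V,M^{\kappa-\eps})$. Writing $\pi(\mbf{b'})=j_*\mbf{b''}$ and invoking naturality once more gives $i^*\mbf{\omega}\CAP\pi(\mbf{b'})=j_*\bigl(\mbf{\omega}|_U\CAP\mbf{b''}\bigr)$, which vanishes because $\Ho^p(U)=0$ for $p>0$ (so $\mbf{\omega}|_U=0$). Hence $\kappa(\ba,f)\le\kappa-\eps<\kappa$, a contradiction.

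The main technical hurdle is twofold. First, one must produce the deformation retraction $M^{\kappa+\eps}\simeq V$ rigorously despite the possible degeneracy of the isolated critical points at level $\kappa$; this is precisely where the Shrinking Lemma~\ref{le:shrink-to-pt} and the Conley-pair machinery of Section~\ref{sec:N-L} are used to control the flow near $\Crit f$ and to ensure that flow lines away from $U$ arrive at $\{f=\kappa-\eps\}$ in uniformly bounded time. Second, one must verify cap-product naturality and the excision isomorphism $j_*$ cleanly over an arbitrary commutative ring $R$, in particular that the relevant triads are excisive so that the cap product for the triple $(V,M^{\kappa-\eps},M^a)$ behaves as claimed.
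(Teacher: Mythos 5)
Your overall architecture matches the paper's: establish the weak inequality $\kappa(\ba,f)\le\kappa(\bb,f)$ by naturality of the cap product and the No-Gaps Lemma~\ref{NEW-le:no-gap-sup}, then upgrade to strict inequality by showing that capping with $\mbf{\omega}$ forces the class down past the critical level $\kappa$, using the fact that $\mbf{\omega}$ restricts to zero on a suitable neighborhood of the critical set at level $\kappa$. The bookkeeping of the strict step is a little different (you fix $\kappa$ at the start and derive $\kappa(\ba,f)\le\kappa-\eps$; the paper sets $c:=\kappa(\ba,f)$ and shows $\mbf{b^+}:=\mbf{j^{c+\eps}_*}\bb\neq 0$), but this is cosmetic.

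The genuine gap is in the deformation-retraction step, and it is precisely the place where the paper's Conley-pair construction is doing the work. You take $U=\bigsqcup V_i$ to be small coordinate balls around the critical points at level $\kappa$ and assert that ``the downward gradient flow gives a strong deformation retraction of $M^{\kappa+\eps}$ onto $M^{\kappa-\eps}\cup U$.'' This is not automatic: $U$ is \emph{not} forward invariant, so the naive retraction (flow until you first hit $M^{\kappa-\eps}\cup U$, then stop) can be discontinuous in the initial point, because a flow line may enter $U$, leave again, and re-enter, and a nearby flow line may miss $U$ entirely. This is exactly the re-entry obstruction that the paper flags in the footnote to Step (0) of the proof of Proposition~\ref{prop:amb-thick}. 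The paper avoids it by using the sets $\Ww_i=\varphi_{[0,\infty)}\Nn_i$ from Proposition~\ref{prop:amb-thick}(i): these are open, nullhomotopic, and \emph{forward invariant} by construction (and the exit through $N_i^-\subset L_i$ is clean by Theorem~\ref{thm:Conley-pair}(iv) and Remark~\ref{rem:no-re-entry}), so the arrival time at $M^{c-\eps}\cup\Ww$ is continuous and the retraction is immediate. You correctly diagnose this step as ``the main technical hurdle'' and remark that the Conley-pair machinery is supposed to control it, but your actual construction uses plain coordinate balls and never invokes that machinery; as written the proof does not close the gap it names. To repair it you would either have to replace $U$ by the forward-invariant thickenings $\Ww$ of Proposition~\ref{prop:amb-thick}(i) (which is the paper's route), or invoke Palais's deformation theorem~\cite[Thm.~5.11]{Palais:1966b} in its full $\eps$-versus-neighborhood quantified form, which is a nontrivial external input. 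A secondary, smaller issue: the excision isomorphism $\Ho_*(U,U\cap M^{\kappa-\eps})\to\Ho_*(V,M^{\kappa-\eps})$ you invoke needs $\overline{M^{\kappa-\eps}\setminus U}\subset\mathrm{int}_V(M^{\kappa-\eps})$, which does not follow merely from openness of $U$ in $V$; some care with the interaction between $U$ and the hypersurface $\{f=\kappa-\eps\}$ is required (the paper instead uses the homotopy equivalence $\Ww\cup M^{c-\eps}\sim M^{c+\eps}$ together with the excisive triad $(M^b;\Ww,M^{c-\eps})$ and never needs this particular excision).
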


\begin{corollary}\label{cor:LS-princple}
Suppose $R$ is a commutative ring and $f$ is a $C^2$ function on
a closed manifold $M$, then the number of critical points
$$
     \Abs{Crit f}>\sub_R(M)
$$
is strictly larger than the maximal number of consecutively
subordinated classes.
\end{corollary}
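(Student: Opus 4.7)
The plan is to read off Corollary~\ref{cor:LS-princple} directly from the refined minimax principle (Theorem~\ref{NEW-thm:LS-princple}) after translating the absolute subordination chain into the relative setup required by that theorem. First I would dispose of the trivial case: if $\Crit f$ is infinite there is nothing to prove, so assume $\Crit f$ is finite, and therefore isolated. Since $M$ is closed, $f$ is bounded, so I can pick regular values $a<\min f$ and $b>\max f$, which yields $M^a=\emptyset$ and $M^b=M$. The pair $(M^b,M^a)=(M,\emptyset)$ produces canonical identifications $\Ho_*(M^b,M^a;R)=\Ho_*(M;R)$ and $\Ho^*(M^b;R)=\Ho^*(M;R)$ that respect the cap product, so any chain of subordinated classes in $\Ho_*(M;R)$ becomes tautologically a chain of subordinated relative classes in $\Ho_*(M^b,M^a;R)$.

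Next I would set $k:=\sub_R(M)$ and pick a maximal chain
\[
  \mbf{b_1}<\mbf{b_2}<\dots<\mbf{b_k}<\mbf{b_{k+1}}
\]
of subordinated classes in $\Ho_*(M;R)$ (if $k=0$ the conclusion $\Abs{\Crit f}\geq 1$ is automatic since $M$ is closed and $f$ attains a minimum). For each consecutive pair $\mbf{b_i}<\mbf{b_{i+1}}$ every hypothesis of Theorem~\ref{NEW-thm:LS-princple} holds: $a$ and $b$ are regular, $f^{-1}[a,b]=M$ is compact, and $\Crit f$ is isolated. The theorem therefore supplies the strict inequality $\kappa(\mbf{b_i},f)<\kappa(\mbf{b_{i+1}},f)$ between the associated minimax critical values, so chaining the $k$ strict inequalities produces $k+1$ pairwise distinct critical values
\[
  \kappa(\mbf{b_1},f)<\kappa(\mbf{b_2},f)<\dots<\kappa(\mbf{b_{k+1}},f),
\]
each realized (by the theorem) as $f(x_i)$ for some $x_i\in\Crit f$. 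This yields $k+1$ distinct critical points, giving $\Abs{\Crit f}\geq k+1>k=\sub_R(M)$.

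There is essentially no obstacle specific to the corollary: all substantive content (existence of critical points at minimax levels, strict increase of $\kappa$ under subordination when $\Crit f$ is isolated, and the thickenings of unstable manifolds via Conley pairs from Proposition~\ref{prop:amb-thick}(i) that feed into that strict increase) has already been absorbed into Theorem~\ref{NEW-thm:LS-princple}. The only point requiring care is the passage between the absolute setup (in which $\sub_R$ is phrased) and the relative pair needed by the theorem; choosing $a,b$ outside the range of $f$ makes this passage a direct identification, and the rest is bookkeeping.
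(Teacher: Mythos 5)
Your proof is correct and follows exactly the same route as the paper: take a maximal chain $\mbf{b_1}<\dots<\mbf{b_{k+1}}$ and apply the refined minimax principle (Theorem~\ref{NEW-thm:LS-princple}) repeatedly to get $k+1$ distinct critical values. The paper's proof is a two-line version of this; you supply the bookkeeping it leaves implicit (the choice of regular values $a<\min f$ and $b>\max f$ so that $(M^b,M^a)=(M,\emptyset)$ identifies the absolute and relative subordination chains, and the dispatch of the infinite and $k=0$ cases), which is a useful addition but not a different argument.
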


\begin{proof}
Suppose $\sub_R(M)=k$. Then there is a chain
$\mbf{b_1}<\mbf{b_2}<\dots<\mbf{b_{k+1}}$.
Now the Lusternik-Schnirelmann principle,
Theorem~\ref{NEW-thm:LS-princple},
provides corresponding critical values $c_1<c_2<\dots<c_{k+1}$.
\end{proof}

\begin{remark}[Minimal number of critical points]\label{NEW-rem:crit-f-ests}
For any $C^2$ function on a closed manifold $M$
we can now estimate the number $\abs{\Crit f}$
of critical points as follows:
If not all critical points are isolated, there are infinitely many of
them anyway. If they are isolated, the Lusternik-Schnirelmann refined
minimax principle tells that their number is strictly bounded below
by the subordination number of $M$. If all critical points
are non-degenerate, the Morse inequalities bound $\abs{\Crit f}$
from below by the dimension of the total homology of $M$ (suppose
field coefficients for simplicity). In the non-degenerate case one
has the estimates~(\ref{eq:Morse-subord-compar}).
Hence Morse theory is stronger than subordination.
\end{remark}

\begin{proof}[Proof of Theorem~\ref{NEW-thm:LS-princple}]
The identity $\mbf{a}=\mbf{\omega}\CAP\mbf{b}$ 
for some $\mbf{\omega}\in\Ho^{p>0}(M^b)$ shows that
the assumed non-triviality of $\mbf{a}$ implies non-triviality
of $\mbf{b}$. So both minimax values are defined and the
weak inequality $\kappa(\mbf{a},f)\le\kappa(\mbf{b},f)$ follows
by definition~(\ref{NEW-eq:minimax-value})
of $\kappa$ and the functoriality property\footnote{
  Note that since $j^s:(M^b,\emptyset)\to (M^b,\emptyset)$
  is the identity, it holds that $\mbf{{j^s}^*}\mbf{\omega}=\mbf{\omega}$.
  The final relative cap product
  is the one associated to the excisive triad $(M^b;M^s,\emptyset)$.
  }
$$
     \mbf{j^s_*}(\mbf{a})
     =\mbf{j^s_*}(\mbf{\omega}\CAP\mbf{b})
     =\mbf{j^s_*}(\mbf{{j^s}^*\omega}\CAP\mbf{b})
     =\mbf{\omega}\CAP \mbf{j^s_*}(\mbf{b})
$$
of the (relative) cap product under the inclusion induced triad maps
$$
     j^s:(M^b;M^a,\emptyset)\to(M^b;M^s,\emptyset),\quad s\in[a,b].
$$
For functoriality see e.g.~\cite[VII~12.6]{dold:1995a}
which applies since $(M^b;M^a,\emptyset)$ and $(M^b;M^s,\emptyset)$
are excisive triads by~\cite[III~8.1~(a)]{dold:1995a}.

Assume that all critical points of $f$ are isolated
in order to prove the strict inequality
$c:=\kappa(\mbf{a},f)<\kappa(\mbf{b},f)=:C$
of the two critical values $c,C\in(a,b)$ of $f$.
Since $f^{-1}[a,b]$ is compact and all critical points are isolated there are
only finitely many of them, thus there is an $\eps>0$ such that
the interval $[c-\eps,c+\eps]$ is a subset of $(a,b)$ and contains no critical
values other than $c$ itself.
We prove below that the projected class
$\mbf{b^{+}}:=\mbf{j_*^{{c+\eps}}}(\mbf{b})\in
\Ho_*(M^b,M^{{c+\eps}})$ is still non-trivial. Thus $\mbf{b^{+}}$
provides via~(\ref{NEW-eq:minimax-value}) the critical value
$$
     C^\prime:=\kappa(\mbf{b^{+}},f)\in({c+\eps},b)\subset(c,b)\subset(a,b).
$$
Thus $c<C^\prime$. But $C^\prime=C$ by the very
definition~(\ref{NEW-eq:minimax-value})
of $\kappa$ together with functoriality $\mbf{j_*^s}\mbf{j_*^{{c+\eps}}}
=\mbf{(j^s j^{{c+\eps}})_*}$. It also enters that, although the infimum
$C^\prime$ arises from the subset $[c+\eps,b]$ of the set $[a,b]$
used to obtain the infimum $C$, the missing points
are irrelevant since the zero condition for $\mbf{j^s_*}$ is not
satisfied at $s=c+\eps$. Thus by the no-gap Lemma~\ref{NEW-le:no-gap-sup}
the zero condition holds in both cases precisely on one and the same interval
that extends from some $s_0\in(c+\eps,b)$ all the way to and including $b$.

It remains to prove non-triviality $\mbf{b^{+}}\not= 0$,
say by contradiction.
For each critical point $x_i$ on level $c$ pick, according to
Proposition~\ref{prop:amb-thick}~(i), an open thickening $\Ww_i$
of the unstable manifold $W_i$ of $x_i$ in such a way that
the thickenings are pairwise disjoint.
Let $\Ww$ by the union of the chosen thickenings.
Consider the cohomology exact sequence associated to the inclusion
induced maps $J\circ I:\Ww\to M^b\to (M^b,\Ww)$ and note that
the restriction class $\mbf{I^*\omega}=0$ is trivial,
because $\Ww$ contracts to the critical \emph{points} on level $c$,
but the degree $p>0$ of $\mbf{\omega}$ is positive.
Thus by exactness of the sequence the class $\mbf{\omega}=\mbf{J^*\Omega}$
has a representative $\mbf{\Omega}$ coming from
$\Ho^p(M^b,\Ww)$.
\newline
Consider the inclusion induced map between excisive\footnote{
  Both triads are excisive by~\cite[III~8.1~(d)]{dold:1995a}
  since $X_1$ and $X_2$ are open in $X_1\CUP X_2$.
  } 
triads given by
$$
     f:(M^b;M^a,\emptyset)\to(M^b;M^{c+\eps},\Ww)
$$
and note that $\mbf{f^*\Omega}=\mbf{J^*\Omega}=\mbf{\omega}$ and
that the maps $\mbf{f_*}$ and $\mbf{j^{c+\eps}_*}$ coincide on $\Ho_*(M^b,M^a)$,
hence on $\bb$ and on $\ba$.
Together with functoriality, see e.g.~\cite[VII~12.6]{dold:1995a},
we get that
\begin{equation}\label{NEW-eq:cap-contra-proj}
\begin{split}
     \mbf{j^{c+\eps}_*}(\ba)
   &=\mbf{f_*}(\ba)\\
   &=\mbf{f_*}\left(\mbf{\omega}\CAP\bb\right)\\
   &=\mbf{f_*}\left(\mbf{f^*}\mbf{\Omega}\CAP\bb\right)\\
   &=\mbf{\Omega}\CAP \mbf{f_*}\bb\\
   &=\mbf{\Omega}\CAP\mbf{b^+}
     \in\Ho_*(M^b,M^{c-\eps})
\end{split}
\end{equation}
where the last cap product
\begin{equation*}
     \Ho^p(M^b,\Ww)\times
        \Ho_*(M^b,\underbrace{\Ww\cup M^{c-\eps}}_{\sim M^{c+\eps}})
     \overset{\cap\;}{\longrightarrow}
     \Ho_*(M^b,M^{c-\eps})
\end{equation*}
is the one associated to the excisive\footnote{
  The triad is excisive by~\cite[III~8.1~(a)]{dold:1995a} for
  $X_1=\Ww$ and $X_2=M^{c-\eps}$.
  }
triad $(M^b;\Ww,M^{c+\eps})$; cf.~\cite[VII~12.3]{dold:1995a}.
Obviously a key step is the homotopy equivalence
$\sim$ due to the fact that $M^{c-\eps}\cup\Ww$ is a deformation retract
of $M^{c+\eps}$. The latter follows from an analogue for isolated critical points
of the cell attachment theorem~\cite[Thm.\,3.1]{milnor:1963a}
(which requires non-degenerate critical points);
the way we defined $\Ww$ using Conley blocks with clear cut
entrance loci helps nicely.
The analogue is called the \textbf{deformation Theorem}
and it is due to Palais~\cite[Thm.~5.11]{Palais:1966b}.
\\
Now assume by contradiction that $\mbf{b^+}=0$.
Hence by~(\ref{NEW-eq:cap-contra-proj}) the projection $\mbf{j_*^{c+\eps}a}$
of the class $\ba$ to $\Ho_*(M^b,M^{c+\eps})$ vanishes even in
$\Ho_*(M^b,M^{c-\eps})$, that is $\ba=0$ in $\Ho_*(M^b,M^{c-\eps})$
or likewise $\mbf{j_*^{c-\eps}a}=0$.
So by the no-gaps Lemma~\ref{NEW-le:no-gap-sup}
we get $c=\kappa(\mbf{a},f)\le c-\eps$. Contradiction.
\end{proof}

\bibliographystyle{abbrv}
\bibliography{$HOME/Dropbox/0-Libraries+app-data/Bibdesk-BibFiles/library_math}{}

\end{document}